\definecolor{antiquewhite}{rgb}{0.98, 0.72, 0.54}
\pgfplotsset{compat=1.10}
\newtheorem{theorem}{Theorem}
\newtheorem{corollary}[theorem]{Corollary}
\newtheorem{proposition}{Proposition}
\newtheorem{remark}{Remark}
\newlength{\Oldarrayrulewidth}
\newcommand{\Cline}[2]{
  \noalign{\global\setlength{\Oldarrayrulewidth}{\arrayrulewidth}}
  \noalign{\global\setlength{\arrayrulewidth}{#1}}\cline{#2}
  \noalign{\global\setlength{\arrayrulewidth}{\Oldarrayrulewidth}}}
\def\maximize{\mathop{\rm maximize}}
\def\argmax{\mathop{\rm arg\,max}}
\algnewcommand{\algorithmicand}{\textbf{ and }}
\algnewcommand{\algorithmicor}{\textbf{ or }}
\algnewcommand{\OR}{\algorithmicor}
\algnewcommand{\AND}{\algorithmicand}
\algnewcommand{\var}{\texttt}
\titlespacing*{\section} {0pt}{3ex plus 1ex minus .2ex}{1.5ex plus .2ex}
\titlespacing*{\subsection} {0pt}{2.5ex plus 1ex minus .2ex}{1.25ex plus .2ex}
\titlespacing*{\subsubsection}{0pt}{2.25ex plus 1ex minus .2ex}{1ex plus .2ex}
\titlespacing*{\paragraph} {0pt}{2.5ex plus 1ex minus .2ex}{1em}
\providecommand{\keywords}[1]
{
  \small	
  \textbf{\textit{Keywords:}} #1
}
\begin{document}

\clearpage
\title{\vspace{-1.5cm}Increasing Supply Chain Resiliency Through Equilibrium Pricing and Stipulating Transportation Quota Regulation}

\date{\vspace{0.1cm}}
\author{Mostafa Pazoki%
\thanks{Edwards School of Business, University of Saskatchewan, Saskatoon, SK, Canada. Email: \href{mostafa.pazoki@gmail.com}{mostafa.pazoki@gmail.com}}  \:, Hamed Samarghandi%
\thanks{\emph{Corresponding Author}; Edwards School of Business, University of Saskatchewan, Saskatoon, SK, Canada. Email: \href{samarghandi@edwards.usask.ca}{samarghandi@edwards.usask.ca}} \:, Mehdi Behroozi%
\thanks{Department of Mechanical and Industrial Engineering, Northeastern University, Boston, MA, USA. Email: \href{m.behroozi@neu.edu}{m.behroozi@neu.edu}}}

\maketitle
\vspace{-1cm}
\begin{abstract}
    Supply chain disruption can occur for a variety of reasons, including natural disasters or market dynamics for which resilient strategies should be designed. If the disruption is profound and with dire consequences for the economy, it calls for the regulator's intervention to minimize the impact for the betterment of the society. This paper considers a shipping company with limited capacity which will ship a group of products with heterogeneous transportation and production costs and prices, and investigates the minimum quota regulation on transportation amounts stipulated by the government. An interesting example can happen in North American rail transportation market, where the rail capacity is used for a variety of products and commodities such as oil and grains. Similarly, in Europe supply chain of grains produced in Ukraine is disrupted by the Ukraine war and the blockade of sea transportation routes, which puts pressure on rail transportation capacity of Ukraine and its neighboring countries to the west that needs to be shared for shipping a variety of products including grains, military, and humanitarian supplies. Such situations require a proper execution of government intervention for effective management of the limited transportation capacity to avoid the rippling effects throughout the economy.     
    We propose mathematical models and solutions for the market players and the government in a Canadian case study. Subsequently, the conditions that justify government intervention are identified, and an algorithm to obtain the optimum minimum quotas is presented. 
\end{abstract}

\keywords{Rail Transportation Pricing Optimization; Game Theory; Minimum Quota Regulation; Resilient Commodities Supply Chain}

\section{Introduction}
\label{sec:Intro}
Many societal requirements and special circumstances may necessitate government's limited intervention in free markets. The most recent and major example of such benevolent interventions is the unprecedented policies of the governments around the world in the wake of the COVID-19 Pandemic. The field of transportation and supply chain can be heavily disrupted by crises of this type and can benefit from policymakers' intervention. Supply chain disruptions can occur due to several other forms of events such as a natural disaster in a major port, an obstruction in the Suez Canal, chronic shortages in products that are either not economical to produce or their technology is not available, strikes impacting major suppliers of certain products, and sanctions or blockades against a major producer country of certain products or commodities. Such disruptions could happen in the supply chain of almost all products regardless of how basic or advanced they might be.

It is also possible for these disruptions to happen due to the free markets internal dynamics. These may occur when a purely profit-driven supply chain values certain products more than others while the disfavored products, although non-profitable for the supply chain and transportation companies, are vital for the society. Furthermore, time sensitivity of such situations or perishability of the products may not allow us to wait for the market to regulate itself. Imposing a minimum shipment quota for certain products may be necessary for the betterment of society or resiliency reasons. 
Two interesting cases happened in Canada's \citep{CanadaCase} and North Dakota's \citep{USNYT,USNBCNews} rail transportation systems. Here, we focus on the Canadian case.

In 2013, the Canadian railroad capacity was strained. Grain harvested, which needed to be shipped to the consumption markets by rail, reached an all-time high. Soaring global oil prices and limited pipeline capacity caused the producers to consider shipping oil to the international markets by rail. Meanwhile, an abnormally cold winter stalled the trains. A carry-over of 18 million tonnes of crops was expected in 2014, six million tonnes higher than the average of 12 million. Moreover, oil producers' profit margins were high; hence, they were able to allocate most of the rail transportation capacity to oil and oil-derivatives. This situation urged the Federal Government to ensure that carry-over volumes of grain will be shipped to the markets.

In May 2014, after a series of consultation with various interest groups and stakeholders including farmers, oil producers, various other commodity producers such as chemical and lumber producers, transportation companies, public representatives, etc., the Federal Government passed ``Canada's Fair Rail for Grain Farmers Act'' \citep{Canada2014Act} to reduce backlogs at rail terminals and grain elevators, and to ensure that farmers can transport their grain to the markets during the peak price season. The act states that Canadian National Railway\footnote{\href{https://www.cn.ca/en}{https://www.cn.ca/en}} (CN) and Canadian Pacific Railway\footnote{\href{https://www.cpr.ca/en}{https://www.cpr.ca/en}} (CP) must each ship 500,000 tonnes of grain per week. The described interventions is not unique to the commodities transportation market and may occur in several other circumstances. For instance, in the wake of the disruption caused by the COVID-19 pandemic, in 2021 American government established a Supply Chain Disruptions Task Force to address short-term supply chain discontinuities and shortages for critical products such as semiconductor manufacturing and advanced packaging; large capacity batteries, like those for electric vehicles; critical minerals and materials; and pharmaceuticals and active pharmaceutical ingredients (APIs) \cite{scTaskForce2021}. It also intervened in the operation of Ports of Los Angeles and Long Beach to address transportation bottlenecks to ensure availability of consumer goods in the shelves of U.S. retailers \cite{PortLA2021intervention}. Motivated by the described events, this article examines the minimum quota regulation stipulated on transportation companies, where a group of suppliers are competing over the transportation capacity. This is important because some of these products are critical for the consumers, and their shortage in the market results in severe societal consequences, which necessitates regulator's intervention. Avoiding negative ramifications is so crucial that justifies the regulator's intervention in the market, although it may slightly reduce the suppliers' and the transporter's profits.

This paper assumes that there are several suppliers producing distinct products and commodities. These commodities must be shipped from the point of production to the point of sale or consumption using the same mode of transportation, which, in our case, is rail. Since the number of available trains and shipping containers are limited, the suppliers compete with each other to gain access to transportation. If the profits for one of the products is much higher than the others, the producers will offer dominating transportation prices to the transportation provider, essentially cutting the producers of the other products off the market. 

This was the case in 2013, when oil producers offered much higher prices for rail transportation compared to grain producers, which left the farmers in a weak position. The grain backlog and unavailability of transportation costed farmers between two and four billion dollars. In case one or more of these left-off products are of strategic importance, government will be willing to intervene in favor of the suppliers if they cannot suggest a competitive transportation price to the shipping firms. As a result of the described situation, the Federal Government mandated a quota (5,500 carloads per week at the time) for grain transportation and a penalty for non-compliance through ``\emph{Canada's Fair Rail for Farmers Act}.'' 

Once the regulation was in place, all the stakeholders expressed disappointment. According to grain producers, neither the assigned quota was enough to remove the backlog, nor the penalty was enough to convince the transporters to meet the quota. The farmers believed that the oil producers were able to cover the penalty for the transportation companies. On the other hand, the oil producers accused the government of intervening in the free market of transportation and tilting the ground against the profitable commodity.

Transportation capacity, either defined for people's movement in a transportation network or moving commodities, depends almost completely on node capacity and route capacity. In general, transportation capacity is the \emph{maximum throughput} of a system performed within the permitted constraints \citep{Kasikitwiwat2005}. Therefore, for the rail transportation, capacity could mean the minimum of total car capacity (appropriate for handling a specific product), the rail road capacity, and temporary stocking capacity in warehouses, which also includes loading/unloading capacity. In this paper, rail transportation capacity is measured by the number of carloads of products shipped during a certain amount of time, which In North America, is reported by the transporters on a weekly basis, and is available from various sources\footnote{For example, see the weekly reports of the Association of American Railroads (AAR), accessed on September 5, 2022 at \href{https://www.aar.org/data-center/rail-traffic-data}{https://www.aar.org/data-center/rail-traffic-data}}. When the demand for shipping a product is more than the addressed transportation capacity for a limited amount of time, the suppliers need to stock the excess products and pay the required expenses. However, if the total supply and demand for the products are constantly more than transportation capacity, as was the case for the described Canadian episode, outside (non-market) intervention is required to guarantee some transportation share for the suppliers of all the various products and commodities requiring transportation. Having this in mind, we seek to address the following research questions regarding the public policy and transportation regulation:

\ResearchQuestion{When should the regulators intervene in the market in favor of the producers of one of the commodities?}
\ResearchQuestion{Where intervention is necessary, how should the minimum quota be defined so that the overall outcome benefits the society as a whole?}
\normalfont

This research aims to analyze the dynamics between producers, shippers, and regulators as the main players of the market, carefully and thoroughly, to advance the understanding and descriptive modeling that explain the actions of the market players, and prescribe methods that improve market resilience and response to disruptions. One main direction is to make the impact of regulation of the whole system more effective and efficient, for which the regulator needs to comprehend the market mechanism completely.
In our case, market mechanism consists of the impact of shipping cost and price of a commodity on the commodity's allocated shipping capacity. The objective of this research is to establish guidelines for the regulator in setting a minimum quota on the shipping amount for the commodities that will be otherwise cut off from the market. Therefore, the pricing policies of the suppliers and the transportation firm will be investigated, and consequently, the amount of various commodities being shipped will be checked. This clarifies the market reaction to introducing minimum quota regulations and the impact of such interventions on the commodity transportation amounts. Hence, the models proposed in this research prove extremely beneficial to the regulator in following the interests of the producers and the public simultaneously. 

The problem under consideration in this research is as follows: there exists a market with multiple players: one transportation company with limited transportation capacity, several suppliers which need to ship their products to the point of consumption, and a regulator which overlooks the transportation and consumption market for the sake of the consumers. The goal of the suppliers and the shipping company is to maximize their own profits; the suppliers through offering competitive prices to the shipping company that lead to gaining access to more transportation capacity, and the shipping company by allocating its capacity to those suppliers which prove most profitable. On the other hand, the goal of the regulator is to maximize social utility by ensuring a minimum level of availability for critical products. We want to determine minimum quotas for various groups of commodities, scrutinize the impact of regulator's decisions on the shipment amounts, and investigate the pricing policies of the suppliers and the transportation firm, such that social utility is maximized without significantly sacrificing the profit of the suppliers and the shipping company.

To the best of our knowledge, this is the first paper that discusses government intervention in a transportation market in the form of minimum quota, where there is a vertical competition between the shipping company and the suppliers, and there is a horizontal competition between the suppliers. In addition, real data from the Canadian market is used to clarify the implementation of the methods and algorithms discussed in this paper.

The rest of the paper is organized as follows. Section \ref{sec:lit} is devoted to the review of the related literature. Section \ref{Model} presents the mathematical models for the suppliers and the shipping company. The next step in defining a successful market intervention policy is investigating the reaction of the market to introducing minimum quota regulations; this will be discussed in Section \ref{sec:Regulation}. A numerical study based on real data from Canadian transportation market is presented in Section \ref{Numerical}. The paper is summarized and concluded in Section \ref{Conclusion}. 

\section{Literature Review}
\label{sec:lit}
It is often the case that resources required by a population are limited, and an efficient management system is essential to tackle the problem of resource scarcity. Limitation in resources happens when a company deals with high demand and the supplies are low. Facing high demands and having access to low supplies is one of the several complications of transportation systems as well; hence, an efficient transportation management system is required to maximize companies' revenues. Matching supply and demand in transportation realm can be done using a variety of approaches; namely, changing the transportation pricing scheme, and changing the regulations governing the transportation market. Both of these approaches are of interest for this research. We review the literature from both perspectives. \vspace{-5pt}

\paragraph{Transportation pricing and capacity allocation:} Designing an efficient pricing model plays an important role in the profitability and success of the transportation systems. Any transportation pricing effort must consider the structure of the market. Such studies may also need to make certain assumptions regarding some external cost factors such as congestion and scarcity cost, accident cost, air pollution and noise cost, energy dependency cost, and climate change \citep{maibach2008handbook}. For instance, \cite{mcauley2010external} developed a model that compared the external costs of shipping non-bulk products using rail or road as competing modes between Australian state capitals by analyzing external costs of freight transportation including pollution, accidents and congestion. \cite{delucchi2011external} conducted the same project as \cite{mcauley2010external}, but considered all modes of transportation available in the United States. Similarly, \cite{demir2015selected} conducted a review on freight transportation pricing and found that both external and internal factors have an impact on the transportation price. Below, we review the transportation pricing articles most relevant to this study.

\cite{zhou2009pricing} proposed a mathematical model to optimize the strategies under a monopoly and duopoly market structure. \cite{toptal2011transportation} studied transportation pricing in a market where a truckload and a less than truckload carrier operate, and showed that cooperation between the two carriers leads to better pricing schemes and saving opportunities. \cite{azadian2018service} developed a mixed-integer non-linear programming model to determine transportation prices based on grouping of service locations in similar geographical areas as an external cost factor, and proposed two decomposition-based approaches to solve their model. \cite{chang2018green} presented a road pricing model to find a price-optimal and environmentally-friendly transportation system considering electronic toll collection systems, as another external cost factor. In a similar work, \cite{gu2018optimal} used both time and travel distance to calculate the toll in a transportation system and noticed that imposing distance toll results in uneven distribution of congestion in the travel network as the users choose the shortest path to avoid distance tolls. 
Transportation pricing has been used in many different application areas. For example, \cite{chen2018pricing} solved a pricing problem for a last-mile transportation system and validated their model by implementing their approach for a case in Singapore. 
\cite{johari2019modeling} developed a method for cordon pricing policy optimization models in a multi-modal traffic network and showed that cordon pricing results in increased usage of public transport system over private transportation means. 
 \cite{woo2020can} analyzed price elasticity in Hong Kong's public transportation system to price-manage its ridership; they noticed that reduction of public transportation fares will not increase public transportation ridership. Instead, elements such as increasing accessibility of public transportation and restricting the usage of private transportation must be taken into consideration. \vspace{-5pt}

Transportation pricing problems have been also studied in the context of game theory. 
For example, \cite{mozafari2015dynamic} studied a dynamic pricing problem for freight carriers who compete with each other in an oligopolistic freight network and formulated the problem as a discrete-time dynamic Nash equilibrium and then incorporated a generalised Branch-and-Bound procedure into a decomposition algorithm to solve the problem.
\cite{peng2016stable} developed a price game mechanism based on Gale-Shapley algorithm to solve a stable vessel-cargo matching problem in the dry bulk shipping market and showed that the proposed price game mechanism provides a bigger surplus for disadvantaged participants. The goal of increasing surplus for the disadvantaged participants has some similarities with our approach. However, we take a different strategy to help the disadvantaged shippers to ship their product in a competitive market by imposing a minimum capacity quota, which will be discussed in Section \ref{sec:Regulation}. In another work similar to ours, \cite{tamannaei2021game} considered a case where the customers of a commodity are able to choose between direct and intermodal transportation systems, called DTS and ISPs. In contrast to our work, the prices for the DTS are readily available, while the cost of using the ISPs are suggested by the transportation companies. They proposed a non-cooperative game-theoretic approach based on Stackelberg leader-follower competition to find the equilibrium prices in this multi-level multi-modal freight transportation pricing problem, and find the equilibrium decisions between transporters and customers using mixed-integer linear programming models. \cite{adler2021review} provides a good review on transport market modeling using game-theoretic approaches. \vspace{-5pt}

In another stream of works, efficient allocation of freight transportation capacity is studied. For example, \cite{amaruchkul2011air} considered the cargo capacity allocation problem of an air-cargo carrier which uses revenue management to distinguish between different classes of freight forwarders such that their allocated capacity maximized the cargo carrier's revenue, and proposed a dynamic programming and two heuristics to solve small and large instances of their model, respectively. In a more recent instance, \cite{taherkhani2022tactical} focused on tactical capacity planning of a transportation system with several stakeholders in which revenue management is implemented, and proposed a model to develop a transportation plan to maximize the profit. In addition, they investigated the computational efforts required to solve their model and investigated the effect of changes in system parameters on the results. 
Transportation capacity can also become challenging when the limiting factor is the available railroad infrastructure, especially when it has to be shared between freight and passengers. In many places, passenger trains are preferred to freight trains when railway capacity becomes scarce. \cite{bablinski2016game} aimed at improving the competitiveness of freight train operators compared to passenger trains using capacity allocation techniques and developed a simulation game based on real data from Brighton Main Line in England with the goal of improving social welfare and equity between different operators, and proposed a number of recommendations to the practiced British railroad policies. 
\cite{xu2022optimize} assumed that high-speed rail systems can carry both passengers and freight, and proposed a revenue management scheme with deterministic and stochastic demand scenarios for such a shared transportation system with the goal of maximizing the rail operator's profit.

\paragraph{Transportation regulation:} Changing transportation regulations deeply influences the well-being of a society. Combined with the prominence and ubiquity of transportation, this topic has been an interesting field for scholars, practitioners, and policymakers. 
For instance, \cite{aschauer2010time4trucks} advised that imposing congestion time regulations on transportation companies and retailers can lead to a reduction of truck traffic from urban roads during rush hours through simulation and rescheduling, which will lead to reduction of traffic congestion and carbon emission, and postponement of capital expenditure for road construction. \cite{goel2014hours} proposed an optimization model to assess the impact of ``hours of service'' regulation on the bottom line of a freight transportation company who operates a fleet of vehicles when the business hours of the customers are taken into consideration. Their analysis showed that hours of service regulations in the European Union maximizes safety, whereas Canadian regulations reduce the costs of transportation.
Regulations related to carbon emission and sustainability measures have also been a subject of research.
\cite{quak2008sustainability} reviewed the consequences of most commonly used sustainability-oriented regulations on economical, environmental, and social costs of retail freight distribution and proposed methods of reducing the burden of such regulations on retail distribution.
\cite{wei2020research} justified that logistics providers must optimize their routes while considering the low carbon emission regulations, and concluded that the existence of high grade roads such as expressways and shorter routes are substantial in achieving the environmental goals.
\cite{tiwari2021freight}, motivated by an Indonesian real case, considered a freight consolidation and containerization problem under a carbon tax regulation scenario and various carbon footprints schemes to minimize transportation costs while lowering the amount of carbon emission from maritime and land transportation modes, and developed a mixed-integer program to solve their model.
\cite{galkin2022assessment} proposed a number of tax schemes for freight transportation in urban locations to reduce the usage of certain vehicles, and compared the impact of their tax schemes with strictly forbidding these forms of transportation in encouraging the adoption of more preferred forms of transportation such as green options.

Moreover, in many countries there exist plans for increasing the share of railroads in freight transportation and incentives for a move towards electric vehicles. For example, \cite{demirci2017designing} investigated the government's intervention strategies for increasing the demand for public-interests goods through providing incentives and rebates for consumers, and used real data from California's electric vehicle market for model validation and verification.
The goals of regulations that intend to impose certain changes to the transportation market could also be achieved as unintended consequences of other regulations and policies. For example, \cite{gnap2018possible} reasoned that the hours of service regulations for truck drivers in the European Union will shift freight transportation from the trucking industry to rail carriers, mostly due to the shortage of drivers, age structure of the drivers, and possibility of freight theft. Motivated by such policies, \cite{van2014development} focused on the regulations that aim to increase the share of rail carriers in long-haul freight transportation in the European Union, to measure the impact of various development scenarios. Their study scrutinized the role of regulations on a range of future outcomes of the market structure, from monopolies to oligopolies, among Belgian rail carriers. 
In many countries, there are regulations in place to prevent monopolistic behavior. Price cap regulatory, as a mechanism to control prices in network industries, is an example of the aforementioned regulations \citep{beesley1989regulation, brennan1989regulating, isaac1991price}. In such a regulatory regime, cap is fixed to the average proposed prices of the regulated company and may change through time because of inflation \citep{breton2012transportation}. \cite{kang2000consumers} studied the effect of reducing the price cap on customers' welfare, and realized that tightening the price cap increases consumer welfare when demand is independent. On the contrary, if demand is inter-dependent, tightening the price cap decreases customer welfare.  
\vspace{-5pt}

One can notice that regulation of freight transportation has been studied from various angles and in diverse jurisdictions. However, it appears that the impact and significance of regulating freight transportation capacity, especially on supply chain of critical products has not gained much attention.

\paragraph{Research gaps and contributions of this paper:}  The above literature review exposes important facts and gaps in the field of transportation research relevant to this paper. Below, we highlight some of these gaps pertinent to the contributions of this study, which will be summarized consequently.
\begin{enumerate}
    \item There are a variety of reasons or situations where the governments may decide to intervene in the activities of the free market for the greater good of the society. Furthermore, the activities of lobbyists that pressure the government to intervene in the market in favor of their interest groups is another trigger for such interventions. The field of transportation, given its importance and pervasiveness, is no exception. However, the literature that focuses on the necessity of enacting minimum transportation quota regulations has not been studied. This article is the first attempt to research this matter.
    \item Transportation is a fundamental activity for the economic prosperity and societal functioning of a country and since it involves physical activities it is highly impacted by the geography of the operations. As such, most studies with a focus on transportation define their parameters based on a certain geographical area's limitations, potentials, laws and regulations. Our review of the literature revealed that a similar study with detailed analysis and modeling does not exist in the context of Canadian transportation system.
    \item The need to study the behavior and reactions of transportation market participants to certain laws and regulations is deeply felt. This paper fills this gap for the case of minimum quota regulations in the rail transportation market.
    \item The existence of both vertical and horizontal competition impacted by new regulations in the transportation market has not gained much attention in the literature. To the best of our knowledge, this paper is the first attempt to analyze the dynamics of such competitions in the context of imposing transportation capacity quota for critical products.
\end{enumerate}

We consider the case of minimum quota regulation in a transportation market with vertical competition between the shipping company and the suppliers, and horizontal competition between the suppliers, which to the best of our knowledge, is the first effort for dealing with this problem. To do this: 
\begin{enumerate}
    \item We formulate the non-cooperative game between the suppliers' and the shipping company as a Stackelberg game.

    \item We formulate and solve the shipping company's problem (the follower's problem) as a bounded continuous knapsack problem.

    \item We identify that each supplier solves a bi-level bounded continuous knapsack problem to maximize its own profits. Furthermore, we model the horizontal competition between all the suppliers as a Bertrand Competition with Capacity Constraints, where the suppliers are categorized into distinct classes based on inputs such as the production amount, market price, inventory and transportation cost of their products. We prove that Mixed Strategy Nash Equilibrium (MSNE) exists for the suppliers and find the cumulative probability function of MSNE transportation price for each supplier.

    \item We find the necessary condition for assigning minimum quota to each supplier, and formulate the regulator's problem as a Social Utility Maximization Model using the concept of Maximum Revenue Entitlement.

    \item Finally, we use real data from the Canadian market to showcase how the methods described in this paper can be used to solve real-world problems.
\end{enumerate}

Although, this research fills some important gaps in the literature, similar to any other research, it involves some limiting assumptions. For instance, we only consider one rail transporter, whereas it is possible to have multiple rail transporters in an area. Furthermore, the suppliers rely merely on the rail transportation mode and the availability of different transportation modes is not considered. Moreover, some aspects of the the post-regulation environment is omitted from the analysis. For example, the mechanism for preventing the shipping company from evading the imposed minimum quotas and refraining from transporting the low-profit products is not considered. Similarly, the mechanism for ensuring the fairness of the prices offered by less competitive suppliers who receive a minimum quota is not included in our analysis.

In the next section, we present the used notations, as well as the mathematical models that describe the actions of the shipping company and suppliers. Subsequently, the regulatory implications are discussed.

\section{Mathematical Model}
\label{Model}
Assume that there are $|I|$ types of products to be transported by rail; we use $i\in I$ to index these products. Also, assume that each product $i$ is produced by a unique producer, which is indexed the same as the products. Moreover, consider that there are $D_i$ units of product $i$ available for shipping. The market-driven sale price of each unit of product $i$ is $S_i$. 
The cost of transporting each unit of product $i$ for the transportation company is $c_i$. The offered shipping price of supplier $i$ to the transportation company for shipping each unit of product $i$ is $p_i$. 
In this paper, the sale price ($S_i$) is assumed to be an exogenous parameter determined by the bigger commodity market consisting of all or most producers and customers of that commodity, potentially at a regional or even global stage in commodity exchanges. Furthermore, we assume that the available units of a product ($D_i$) is not impacted by the sales price ($S_i$) or the shipping price ($p_i$) since the considered model is a single period problem, whereas the mutual impact of market price and production amount appears in future periods.

The shipping company decides on the quantity $u_i$ of product $i$ to ship based on the cost of transportation and the offered price of transportation. Note that the volume units of all products must be unified and all the other product-related specifications must be adjusted accordingly before solving the model. Table \ref{tab:notation} describes the notations used in this paper.

\begin{table}[t]
    \centering
    \caption{Notations used in the paper.}
    \label{tab:notation}
    \begin{tabular}{cl} \Cline{1.5pt}{1-2}
        Parameters \& Variables & Definition  \\ \Cline{1.5pt}{1-2}
        $T$ & Total transportation capacity \\
        $\mbox{MRE}_{i}$ & Maximum revenue entitlement of the shipping company from product $i$ \\
        $\beta_i$ & Financially scaled social importance score of one unit of product $i$ \\
        $D_i$ & Total production amount (ready for shipment) of product $i$ \\
        $S_i$ & Market price of product $i$ \\
        $g_i$ & Inventory cost per unit of unsold product \\
        $c_i$ & Transportation cost per unit of product $i$ \\
        $p_i$ & Transportation price per unit of product $i$ \\
        $u_i$ & Transported amount of product $i$ (allocated space) \\
        $L_i$ & Minimum quota of product $i$ \\
        \Cline{1.5pt}{1-2}
    \end{tabular}
\end{table}

As discussed before, the suppliers compete for the allocated capacity by suggesting better prices to the shipping firm. Upon receiving the suggested prices, the shipping firm decides how much capacity to allocate to each product. Since the suppliers compete over the transportation capacity, and none of them has power or first-move advantage over the others, the competition between the suppliers can be best modeled by a multi-player game for which the Nash equilibrium concept can be applied. Afterward, the non-cooperative decision making between the suppliers and the shipping firm fits the Stackelberg game structure, in which the suppliers are called the ``leaders'', and the shipping firm is denoted as the ``follower''. To solve the described Stackelberg game, we start by solving the follower's problem. Figure \ref{fig:GameStructure} illustrates the framework of the game and the regulator's problem. \vspace{-5pt}

\begin{figure}[h]
    \centering
    \includegraphics[width=\textwidth]{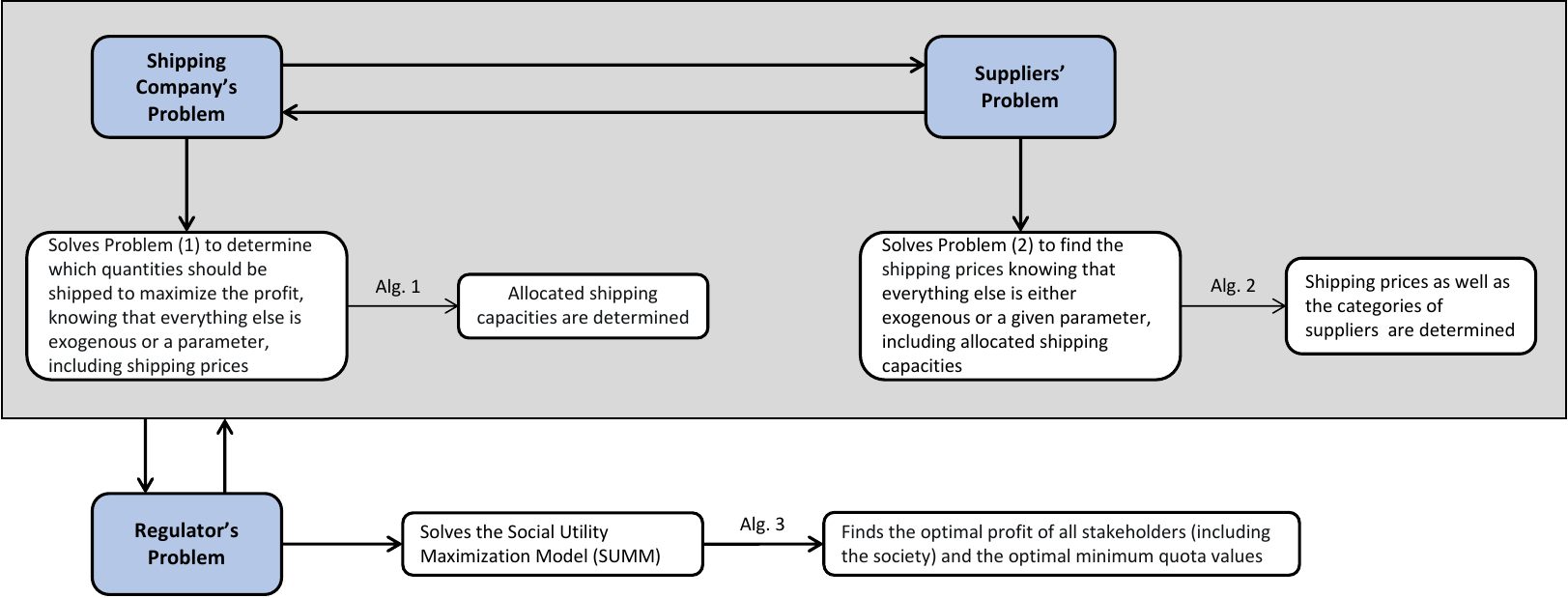}
    \caption{Game structure and the regulator's position}
    \label{fig:GameStructure}
\end{figure}

\subsection{The Shipping Company's Problem}
\label{sec:shipping}
\vspace{-5pt}The shipping company maximizes its profit by solving: \vspace{-10pt}
\begin{eqnarray}
\label{model:shipping}
   \maximize_{\bm{u}} & \underset{i\in I}{\sum}\, u_i(p_i-c_i)  & \qquad \mbox{s.t.}  ~\\
    &  \underset{i\in I}{\sum} \, u_i \; \leq \;  T \,, &   \nonumber \\
    &  0 \; \leq u_i \; \leq \;  D_i \,,   &  \qquad \forall i\in I \nonumber 
\end{eqnarray}
where $\bm{u}=(u_1,\ldots,u_{|I|})$. Note that $p_i$ in this problem is assumed to be a parameter and its value is determined by the supplier as will be discussed in Section \ref{sec:suppliersProblem}. The objective function is a linear combination of transportation profits for all suppliers. Since transportation amount, $u_i$, can be any value below $D_i$, and because partial transportation is possible, the transportation company starts from the supplier with the highest transportation margin $(p_i-c_i)$. In other words, the shipping company is solving the well-known bounded continuous knapsack problem, which garners the solution procedure of Algorithm \ref{alg:1} that describes the iterative procedure the shipping company can employ to allocate the transportation capacity to suppliers such that its profit is maximized.

\begin{algorithm}[htb]
\protect\caption{\label{alg:1}Algorithm ${\sf CapacityAllocator}$ takes the input parameters of the problem and allocates the transportation capacity to the products.}
\SetAlgoLined
\BlankLine
\KwIn{The set of products $I$, production amount $D_i$, transportation price $p_i$, and transportation cost $c_i$ for each product $i\in I$, and total transportation capacity $T$.}
\KwOut{Transportation capacity $u_i$ allocated to each product $i\in I$.}
\BlankLine
\tcc{**********************************************************************************}
Sort $(p_i-c_i), \; i\in I$ in descending order. If a tie occurs, break it randomly\;
Let $Z=\{p_z-c_z,p_{z-1}-c_{z-1},...,p_1-c_1\}$ be the sorted set\;
$t \leftarrow z$\;
$u_i \leftarrow 0$, $\forall i\in I$\;
\While{$T>0$ \AND $t>0$}{
$u_k \leftarrow \min\{T,D_t\}$\;
$T \leftarrow T-u_t$\;
$t \leftarrow t-1$\;
}
\KwRet{$u_i\,, i\in I$}\;
\end{algorithm}

Note that, in each iteration, for any value of the remaining capacity, the solution falls into one of the five cases addressed in Table \ref{tab:bestStrategy}. The described relationships of Table \ref{tab:bestStrategy} are depicted in Figure \ref{fig:my_label}. If the solution procedure starts at $C_1$, it remains in $C_1$, i.e., all suppliers get their maximum required capacity. If current unassigned capacity falls into $C_2$ at any iteration, the next iteration may fall into $C_2$ to $C_5$. Once in $C_4$, the possible next steps are $C_3$, $C_4$, and $C_5$. Finally, $C_3$ and $C_5$ are terminal cases, as was $C_1$. 
\begin{figure}[t]
    \centering
    \includegraphics[trim={3cm 0 3cm 0},clip,width=0.5\textwidth]{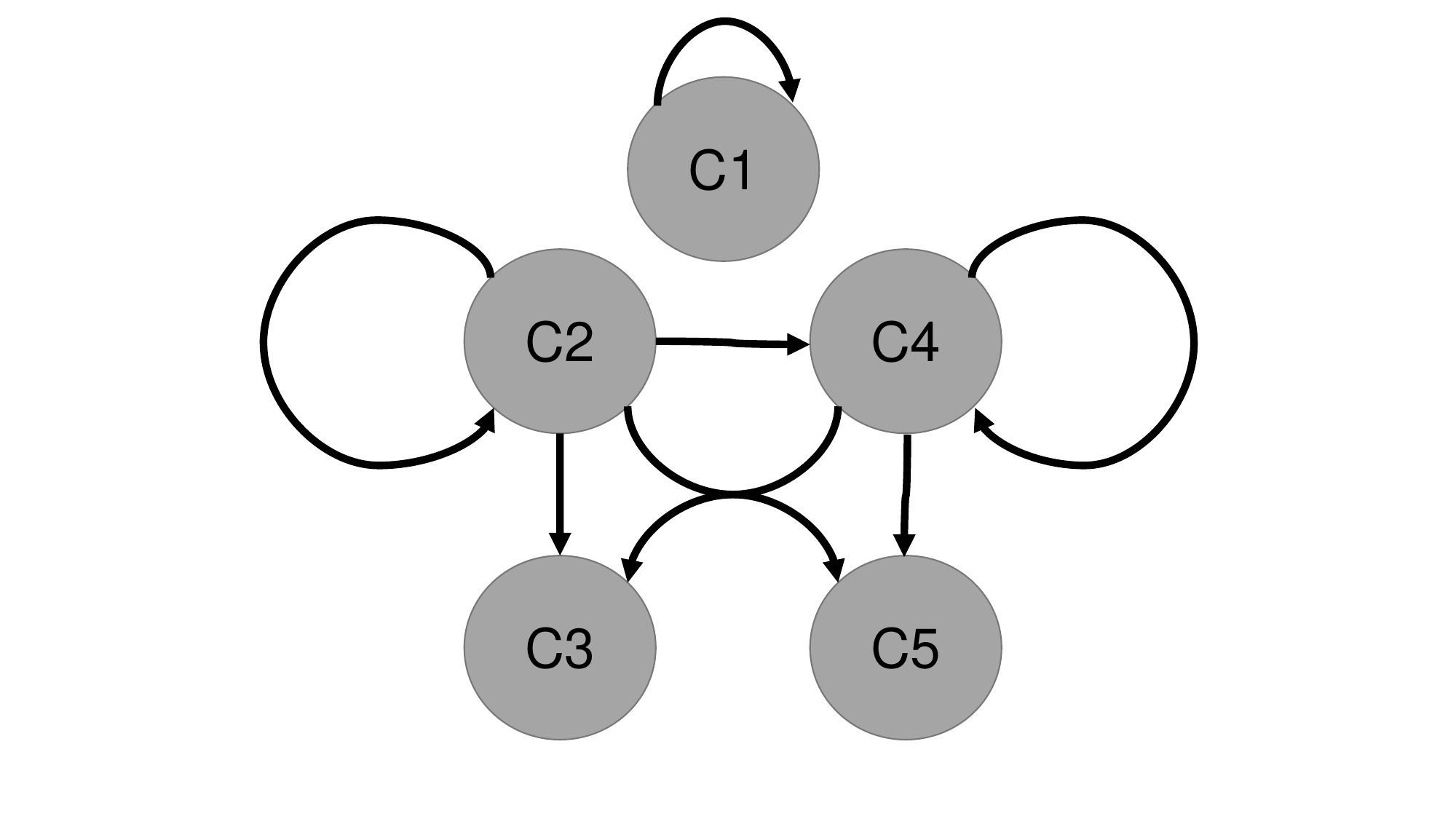}
    \caption{Interconnection of all cases.}
    \label{fig:my_label}
\end{figure}
~\\
\begin{table}[h]
    \centering
    \caption{Transportation Company's Optimal Strategy.}
  \scalebox{0.85}{
    \begin{tabular}{c|lll|lll} \hline
        Case & \multicolumn{3}{c|}{Conditions} & \multicolumn{2}{c}{Solution} \\ \hline
        $C_{1}$ & $\sum_i D_i\le T$ & & &  & $u_i=D_i$ &  \\[1ex]
        $C_{2}$ & $\max\{D_i\}\le T \le \sum_i D_i$, & $\max\{p_i-c_i\}=p_{i*}-c_{i*}$ & &  & $u_{i*}=D_{i*}$, & $\sum_{I-\{i*\}} u_{i}=T-D_{i*}$  \\[1ex]
        $C_{3}$ & $\min\{D_i\}\le T\le \max\{D_i\}$, & $\max\{p_i-c_i\}=p_{i*}-c_{i*}$, & $D_{i*} \ge T$ &  & $u_{i*}=T$, & $u_{-i*}=0$ \\[1ex]
        $C_{4}$ & $\min\{D_i\}\le T\le \max\{D_i\}$, & $\max\{p_i-c_i\}=p_{i*}-c_{i*}$, & $D_{i*} \le T$ &  & $u_{i*}=D_{i*}$, & $\sum_{I-\{i*\}} u_{i}=T-D_{i*}$ \\[1ex]
        $C_{5}$ & $T\le \min\{D_i\}$, & $\max\{p_i-c_i\}=p_{i*}-c_{i*}$ & &  & $u_{i*}=T$, & $u_{-i*}=0$ \\ \hline
    \end{tabular}}\label{tab:bestStrategy}
\end{table}

Consequently, the following statements can be made about the suppliers' competition:

\begin{itemize}
    \item \textbf{Case 1} represents a situation where there is enough transportation capacity for all suppliers. Therefore, this scenario is omitted from the analysis for it does not require government intervention. 
    \item \textbf{Cases 2} and \textbf{4} are intermediate phases, meaning that there are zero or more suppliers that transport all their products. These cases will be called group 1 ($G_1$).
    \item \textbf{Cases 3} and \textbf{5} can be both intermediate and terminal phases, meaning that there is zero or one supplier whose product is partially moved. These states are called group 2 ($G_2$).
    \item The rest of the suppliers cannot transport their products, and are called group 3 ($G_3$).
\end{itemize}

Note that all parameters are adjusted to represent the same unit of volume or weight. 
After calculating the best expected responses of the follower (the shipping company), we proceed with solving the suppliers' (leaders) problem.

\subsection{The Suppliers' Problem}
\label{sec:suppliersProblem}
Supplier $i \in I$ has produced $D_i$ units of product (in terms of volume or weight) and intends to ship them by train to a certain destination to be sold at price $S_i$. Supplier $i\in I$ solves the following bi-level bounded continuous knapsack problem:
\begin{eqnarray}
\label{model:supplier}
    \underset{p_i \geq 0}{\maximize} & v_i(S_i-p_i)-g_i(D_i-v_i)\,, & \qquad \mbox{s.t.}  ~\\
    & v_i = \bm{v}^T \bm{e}_i\,, & \nonumber \\
    & \bm{v} \in \underset{\bm{u}}{\argmax} \left\{\underset{i\in I}{\sum}\, u_i(p_i-c_i): \underset{i\in I}{\sum} \, u_i \; \leq \;  T, \;\; 0 \; \leq u_i \; \leq \;  D_i\right\}\,,  &  \nonumber
\end{eqnarray}
where $\bm{e}_i$ is the $i$th column of the identity matrix.
The first term in the objective function is the production profit, and the second term is the inventory/storage cost in case the shipment quantity is below production quantity $D_i$. Each supplier $i$ suggests a transportation price $p_i$ at the same time as the other suppliers to achieve the maximum transportation capacity of $D_i$.

Looking into the shipping company's solution procedure (Algorithm \ref{alg:1}), assuming the same transportation costs, the supplier with the highest suggested transportation price will be the first to win the transportation capacity. Furthermore, if there are two suppliers and $T \leq D_i\,, \; i=1,2$, the winner takes all the capacity. Therefore, the competition between the suppliers can be best modeled as \textit{Bertrand Competition with Capacity Constraints}.

\begin{remark}
The supplier (producer) has already made a decision on the production amount $D_i$. In other words, if the supplier wins a transportation capacity below $D_i$, the production quantity cannot be adjusted accordingly. The excess production will be stored at the cost of the supplier.
\end{remark}

As previously discussed, there are three groups of suppliers: a) suppliers that will transport all of their product ($G_1$); b) supplier that will transports part of their product ($G_2$); and, c) suppliers that will not transport any of their product ($G_3$). It is clear that within the range of offered transportation prices, $p_i$, there will be dividing points that separate these groups. Those prices are also indifference points, since, for example, a supplier offering a price equal to the dividing point between $G_1$ and $G_2$ will be indifferent in choosing either group because its gain will be the same either way. Proposition \ref{prop:diff-point} utilizes this concept to investigate pricing strategy of suppliers.

\begin{proposition}\label{prop:diff-point}
The indifference transportation prices between $G_1$ and $G_2$, $G_2$ and $G_3$, and $G_1$ and $G_3$, are the same, which is $S_i+g_i$. 
\end{proposition}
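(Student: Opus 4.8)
The plan is to exploit the particular algebraic form of supplier $i$'s objective rather than anything about the knapsack mechanics. First I would rewrite the objective of~\eqref{model:supplier} by expanding the inventory term:
\[
v_i(S_i-p_i)-g_i(D_i-v_i) \;=\; v_i\bigl(S_i-p_i+g_i\bigr)-g_iD_i .
\]
The purpose of this rearrangement is that the dependence on the shipped quantity $v_i$ is now isolated in the single linear factor $(S_i-p_i+g_i)$. Next I would invoke the correspondence set up in Section~\ref{sec:shipping}: supplier $i$ lies in $G_1$, $G_2$, or $G_3$ precisely when its realised shipped amount $v_i$ equals $D_i$, lies strictly between $0$ and $D_i$, or equals $0$, respectively. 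So the payoff the supplier earns ``in group $G_k$'' is the expression above evaluated at the corresponding value (or range of values) of $v_i$.

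With this in hand the indifference prices are a one-line computation. The $G_1$ payoff ($v_i=D_i$) is $D_i(S_i-p_i+g_i)-g_iD_i$ and the $G_3$ payoff ($v_i=0$) is $-g_iD_i$; equating them gives $D_i(S_i-p_i+g_i)=0$, and since $D_i>0$ this forces $p_i=S_i+g_i$, uniquely. Repeating the comparison with the $G_2$ payoff $v_i(S_i-p_i+g_i)-g_iD_i$ for any $v_i\in(0,D_i)$ against either the $G_1$ or the $G_3$ payoff again leaves only the option that the common factor $(S_i-p_i+g_i)$ vanishes, i.e. $p_i=S_i+g_i$. Hence all three indifference prices exist, are unique, and coincide at $S_i+g_i$. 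The conceptual reason, which I would state explicitly, is that at $p_i=S_i+g_i$ the entire objective collapses to the constant $-g_iD_i$ regardless of $v_i$, so the supplier is indifferent not merely between any two of the groups but across the whole admissible range $v_i\in[0,D_i]$; conversely, for $p_i\neq S_i+g_i$ the objective is strictly monotone in $v_i$ (increasing if $p_i<S_i+g_i$, decreasing if $p_i>S_i+g_i$), so the group payoffs are strictly ordered and no other indifference price can arise.

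The main obstacle here is interpretive rather than computational: one must pin down what ``the indifference price between $G_1$ and $G_2$'' even means, given that which group supplier $i$ ends up in is determined jointly by all suppliers' prices through Algorithm~\ref{alg:1}, and that the amount $v_i$ actually shipped inside $G_2$ is itself price- and competitor-dependent. The resolution is exactly the observation that the critical price $S_i+g_i$ annihilates the $v_i$-dependence, so none of those couplings matter — the indifference price is an intrinsic feature of supplier $i$'s own revenue/inventory/transport cost structure. I would therefore make the ``group $\leftrightarrow$ range of $v_i$'' dictionary precise at the start, after which the argument above closes the proof.
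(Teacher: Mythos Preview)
Your argument is correct and is essentially the same pairwise payoff comparison the paper carries out, just organized more efficiently: you first factor the objective as $v_i(S_i+g_i-p_i)-g_iD_i$ and then read off all three indifference conditions from the single linear coefficient, whereas the paper checks the three comparisons $\{D_i,\hat D_i\}$, $\{D_i,0\}$, $\{\hat D_i,0\}$ separately and obtains $p_i=S_i+g_i$ each time. Your added remark that at $p_i=S_i+g_i$ the payoff collapses to $-g_iD_i$ for every $v_i$ (and is strictly monotone otherwise) is a nice explanation of \emph{why} the three thresholds must coincide, which the paper's case-by-case verification leaves implicit.
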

\begin{proof}
The indifference point between $G_1$ and $G_2$ is where supplier $i$ gains the same whether $u_i=D_i$ or $u_i=\hat{D}_i$, where $\hat{D}_i<D_i$.
The price cap is obtained by having the profit of partial transportation less than the profit of full transportation.
\[
\underbrace{\hat{D}_i(S_i-p_i)-g_i(D_i-\hat{D}_i)}_{\text{Transporting $\hat{D}_i$}}\le\underbrace{D_i(S_i-p_i)}_{\text{Transporting $D_i$}} \quad \Longleftrightarrow \quad p_i\le S_i+g_i
\]
Similarly, the indifference point between $G_1$ and $G_3$ is where supplier $i$ gains the same, whether $u_i=D_i$ or $u_i=0$:
\[
\underbrace{-g_iD_i}_{\text{Transporting $0$}}\le\underbrace{D_i(S_i-p_i)}_{\text{Transporting $D_i$}} \quad \Longleftrightarrow \quad  p_i\le S_i+g_i
\]
Finally, the indifference point between $G_2$ and $G_3$ is where supplier $i$ gains the same, whether $u_i=\hat{D}_i < D_i$ or $u_i=0$:
\[
\underbrace{-g_iD_i}_{\text{Transporting $0$}} \le \underbrace{\hat{D}_i(S_i-p_i)-g_i(D_i-\hat{D}_i)}_{\text{Transporting $\hat{D}_i$}} \quad \Longleftrightarrow \quad p_i\le S_i+g_i
\]
\end{proof}
The following corollaries directly result from Proposition \ref{prop:diff-point}.
\begin{corollary}
The maximum offered price is independent from the total available capacity and production amount.  
One can conclude that the supplier(s) with the highest $S_i+g_i-c_i$ are guaranteed to transport non-zero quantity of products.
\end{corollary}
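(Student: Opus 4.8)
The plan is to deduce both assertions directly from Proposition~\ref{prop:diff-point} together with the shipping company's top-down allocation rule (Algorithm~\ref{alg:1} and Table~\ref{tab:bestStrategy}). For the independence claim, I would first argue that no supplier ever rationally offers a price above the common indifference level $S_i+g_i$ established in Proposition~\ref{prop:diff-point}: for any $p_i>S_i+g_i$ the full-transportation payoff satisfies $D_i(S_i-p_i)<-g_iD_i$, so even winning the entire capacity is worse than shipping nothing, and the offer is dominated. Consequently the maximum price supplier $i$ will present equals $S_i+g_i$. Since this expression involves only the sale price $S_i$ and the inventory cost $g_i$, and contains neither the total capacity $T$ nor any production amount $D_j$, the maximum offered price is independent of capacity and production, as claimed.

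For the guarantee, I would combine the price cap with the observation that the shipping company orders suppliers by transportation margin $p_i-c_i$ and fills capacity from the highest margin downward (Algorithm~\ref{alg:1}). The largest \emph{profitable} margin supplier $i$ can post is therefore $(S_i+g_i)-c_i$. Let $i^\ast$ be the supplier attaining the strict maximum of $S_i+g_i-c_i$. The central step is to exhibit a price $p_{i^\ast}\le S_{i^\ast}+g_{i^\ast}$ whose margin strictly exceeds the best-possible margin of every competitor, i.e.
\[
p_{i^\ast}-c_{i^\ast} \;>\; \max_{j\neq i^\ast}\bigl(S_j+g_j-c_j\bigr) \;\ge\; \max_{j\neq i^\ast}\bigl(p_j-c_j\bigr),
\]
where the second inequality holds because each rival's actual margin obeys $p_j-c_j\le S_j+g_j-c_j$. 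Such a $p_{i^\ast}$ exists exactly because $S_{i^\ast}+g_{i^\ast}-c_{i^\ast}$ strictly dominates the right-hand maximum, leaving a nonempty interval of admissible prices. With this choice $i^\ast$ is ranked first regardless of the rivals' offers, so by Table~\ref{tab:bestStrategy} it receives $\min\{T,D_{i^\ast}\}>0$ (as we are outside Case~1 and $T,D_{i^\ast}>0$), establishing the non-zero shipment guarantee.

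The main obstacle is the simultaneous feasibility in the displayed inequality: I must confirm that staying within the loss-free region $p_{i^\ast}\le S_{i^\ast}+g_{i^\ast}$ is compatible with strictly topping every rival's margin, which is precisely what the strict maximality of $S_{i^\ast}+g_{i^\ast}-c_{i^\ast}$ guarantees. A secondary point is the tie case flagged by the phrase ``supplier(s)'': when several suppliers share the maximal $S_i+g_i-c_i$, each can still secure a top-ranked position, so under the random tie-break in Algorithm~\ref{alg:1} the available capacity is split among the tied suppliers and at least one of them obtains a positive quantity while every strictly lower-margin supplier is crowded out. I would note this refinement rather than claim every tied supplier is simultaneously served, since in the scarce-capacity cases $C_3$ and $C_5$ only one top-ranked supplier can be accommodated.
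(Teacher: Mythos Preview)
Your proposal is correct and aligns with the paper's treatment: the paper offers no explicit proof for this corollary, simply declaring it a direct consequence of Proposition~\ref{prop:diff-point}, and your argument is precisely the natural unpacking of that claim---reading off the price cap $S_i+g_i$ (which is manifestly free of $T$ and $D_j$) and then combining it with the margin-ordering rule of Algorithm~\ref{alg:1}. Your added care about the tie case and the feasibility of the strict-dominance interval is a welcome elaboration beyond what the paper states.
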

\begin{corollary}
The maximum price that supplier $i$ suggests is $S_i+g_i$, above which the supplier either prefers to refrain from transporting their product (staying at $G_3$), or is unwilling to offer a higher price to guarantee full shipment of its product (staying at $G_2$). 
\end{corollary}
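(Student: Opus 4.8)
The plan is to write out supplier $i$'s realized profit in each of the three regimes and equate those profits pairwise; the claimed common threshold should then fall out as a one-line algebraic identity. From the supplier's problem~\eqref{model:supplier}, if supplier $i$ ends up in $G_1$ it ships its whole production, $v_i = D_i$, so the inventory term vanishes and its profit is $D_i(S_i - p_i)$. If it ends up in $G_2$ it ships some partial amount $\hat D_i$ with $0 < \hat D_i < D_i$, for a profit of $\hat D_i(S_i - p_i) - g_i(D_i - \hat D_i)$. If it ends up in $G_3$ it ships nothing, $v_i = 0$, for a profit of $-g_i D_i$. The first thing I would record is that, because the production amount $D_i$ is a sunk decision (the supplier cannot re-optimize it once the allocation is known), the shipped quantity attached to each regime is fixed; hence within each regime the profit is an affine function of $p_i$, and the regime boundaries are genuine indifference prices.

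Next I would carry out the three comparisons. Equating the $G_1$ and $G_3$ profits gives $D_i(S_i - p_i) = -g_i D_i$; dividing by $D_i > 0$ yields $p_i = S_i + g_i$. Equating the $G_1$ and $G_2$ profits gives $D_i(S_i - p_i) = \hat D_i(S_i - p_i) - g_i(D_i - \hat D_i)$, i.e. $(D_i - \hat D_i)(S_i - p_i) = -g_i(D_i - \hat D_i)$; cancelling the positive factor $D_i - \hat D_i$ again yields $p_i = S_i + g_i$. Equating the $G_2$ and $G_3$ profits gives $\hat D_i(S_i - p_i) - g_i(D_i - \hat D_i) = -g_i D_i$, which collapses to $\hat D_i(S_i - p_i + g_i) = 0$, and since $\hat D_i > 0$ this is once more $p_i = S_i + g_i$.

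The structural reason all three thresholds coincide --- and the thing I would emphasize in the write-up --- is that in each pairwise comparison the difference of profits factors as (difference of shipped quantities) $\times\,(S_i - p_i + g_i)$, so the indifference condition never sees the actual allocation levels and always reduces to $S_i - p_i + g_i = 0$. There is no serious obstacle here; the only care needed is (i) to note explicitly that the inventory cost is zero in $G_1$ because the shipment is full, and (ii) to observe that the cancellations are valid since $D_i > 0$ and, in the $G_2$ comparisons, the partial amount satisfies $0 < \hat D_i < D_i$ strictly --- in the degenerate cases $\hat D_i \in \{0, D_i\}$ two of the regimes simply merge, which is consistent with the same threshold.
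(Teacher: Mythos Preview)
Your proof is correct and follows essentially the same approach as the paper. In the paper this corollary is stated without a separate proof, as a direct consequence of Proposition~\ref{prop:diff-point}; your argument reproduces that proposition's three pairwise comparisons (with the minor cosmetic difference of solving for the indifference \emph{equality} rather than writing the preference \emph{inequality}), and your observation that each profit difference factors as $(\text{difference of shipped quantities})\times(S_i - p_i + g_i)$ is a nice unifying remark that the paper does not make explicit.
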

The insight Proposition \ref{prop:diff-point} provides, together with the solution algorithm presented for the shipping company aid in identifying further details about different types of the suppliers; namely, $G_1$ to $G_3$. The procedure to identify these groups is described in Algorithm \ref{alg:2}, which initially categorizes the suppliers into the three addressed types\footnote{We later discuss that transfers between $G_1$ and $G_2$ may be possible.}.

\begin{algorithm}[htb]
\protect\caption{\label{alg:2}Algorithm ${\sf SupplierGrouper}$ takes the input parameters and categorizes suppliers into groups.}
\SetAlgoLined
\BlankLine
\KwIn{The set of suppliers $I$, production amount $D_i$, market price $S_i$, inventory cost $g_i$, and transportation cost $c_i$ for each product $i\in I$, and total transportation capacity $T$.}
\KwOut{Group category for each supplier $i\in I$.}
\BlankLine
\tcc{**********************************************************************************}
$G_1,G_2,G_3 \leftarrow \{\}$\;
$m \leftarrow |I|$\;
$J \leftarrow I$\;
$p_i \leftarrow S_i+g_i\,, \; i\in I$\;
Let $\bm{u} = {\sf CapacityAllocator}(D_1,...,D_m,\bm{p,c},T)$\;
\While {$T>0$ \AND $m>0$}{
    Set $l = \underset{i \in J}{\argmax}\{S_i+g_i-c_i\}$. If a tie occurs, break it randomly\;
    \eIf {$D_l \leq T$}{
        $T \leftarrow T-u_l$\; 
        $G_1 \leftarrow G_1 \cup \{l\}$\;
    }{
        $T \leftarrow 0$\; 
        $G_2 \leftarrow G_2 \cup \{l\}$\;
    }
    $m \leftarrow m-1$\;
    $J \leftarrow I\backslash(G_1 \cup G_2)$\;
}
$G_3 \leftarrow J$\;
\KwRet{$G_1,G_2,G_3$}\;
\end{algorithm} 

\begin{proposition}
There is exactly one member in $G_2$ unless the production quantities of all $G_1$ members sum up to $T$ in which case $G_2$ remains empty.
\end{proposition}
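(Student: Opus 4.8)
The plan is to derive the statement directly from the control flow of Algorithm~\ref{alg:2} (\textsf{SupplierGrouper}), exploiting that its inner \texttt{while}-loop inspects the suppliers in the very same descending order of $S_i+g_i-c_i$ as \textsf{CapacityAllocator} does, and performs the same subtractions on the residual capacity. There are two things to establish: (i) the \texttt{else} branch, which is the only way a supplier enters $G_2$, can fire at most once; and (ii) it fires zero times exactly when the members already placed in $G_1$ consume all of $T$.

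For (i): the \texttt{else} branch is entered only when the loop guard $T>0$ holds and $D_l>T$, and its first action is $T\leftarrow 0$, after which the guard $T>0$ fails and the loop terminates. Hence no supplier is examined after the first \texttt{else}, so $|G_2|\le 1$ in all cases. For (ii): write $T_0$ for the initial capacity. While only the \texttt{if} branch has been taken, an easy induction on the iterations shows the residual capacity equals $T_0-\sum_{j\in G_1}D_j$, because $D_l\le T$ in that branch forces $u_l=\min\{T,D_l\}=D_l$, so $T$ drops by exactly $D_l$. Consequently, if the \texttt{else} branch never fires, the loop can stop only because $T$ has reached $0$, i.e. $\sum_{j\in G_1}D_j=T_0$; the alternative exit ($m=0$ with $T>0$) would force $G_1=I$ and $\sum_{i\in I}D_i<T_0$, which is Case~1 and is excluded from the analysis. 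Conversely, if $\sum_{j\in G_1}D_j=T_0$, the residual capacity is $0$ at the moment the last $G_1$ member is appended, so the loop halts there and the \texttt{else} branch is never reached; thus $G_2=\emptyset$. Combining the two directions, $G_2=\emptyset$ iff $\sum_{j\in G_1}D_j=T_0$, and in every other case $|G_2|=1$.

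The one point that deserves explicit care — the closest thing to an obstacle here — is the bookkeeping consistency between the vector $\bm u$ computed once by \textsf{CapacityAllocator} and the running value of $T$ maintained inside \textsf{SupplierGrouper}: one must check that when \textsf{SupplierGrouper} reaches supplier $l$ its residual $T$ coincides with the residual \textsf{CapacityAllocator} held when it assigned $u_l$, so that ``$T\leftarrow T-u_l$'' really subtracts $D_l$ on an \texttt{if} step (and subtracts the remaining capacity on the single \texttt{else} step, which is what makes the $G_2$ member partially transported). This follows because the two procedures share the same processing order and the same per-step decrement, but it should be stated rather than left implicit. Everything else is immediate from the loop structure; beyond the price assignment $p_i=S_i+g_i$ already built into Algorithm~\ref{alg:2}, no further use of Proposition~\ref{prop:diff-point} is needed, and the standing exclusion of Case~1 is exactly what keeps the ``unless'' clause sharp (otherwise it would have to be relaxed to $\sum_{j\in G_1}D_j\le T$).
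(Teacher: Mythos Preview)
Your proof is correct and takes a genuinely different route from the paper's. The paper argues by contradiction: it supposes two suppliers $k$ and $\ell$ lie in $G_2$, observes that the one with the larger margin $S_i+g_i-c_i$ would be selected first by the algorithm and would then either exhaust its demand (landing in $G_1$) or exhaust the residual capacity (leaving nothing for the other), contradicting $|G_2|\ge 2$. You instead trace the control flow of Algorithm~\ref{alg:2} directly: the \texttt{else} branch is the sole entry point to $G_2$, and since it sets $T\leftarrow 0$ the loop guard fails immediately thereafter, so $|G_2|\le 1$; you then characterise the zero case by the residual-capacity invariant $T_0-\sum_{j\in G_1}D_j$. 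Your approach is more mechanical and arguably cleaner --- in particular, you make explicit the bookkeeping point that the decrement $T\leftarrow T-u_l$ in the \texttt{if} branch really subtracts $D_l$ (because the two algorithms share processing order and residuals), which the paper leaves implicit. The paper's proof, on the other hand, is shorter and keeps the economic interpretation in view (``$\ell$ can offer a higher price''), though that phrasing is slightly loose given that Algorithm~\ref{alg:2} fixes $p_i=S_i+g_i$ at the outset. Both arguments are ultimately driven by the same ordering and the same capacity exhaustion, just packaged differently.
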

\begin{proof}
Assume there are more than one supplier, namely $k$ and $\ell$, in $G_2$. First consider the case where $S_{k}+g_{k}-c_{k} < S_{\ell}+g_{\ell}-c_{\ell}$. Then, supplier $\ell$ can offer a higher price and absorb either enough capacity to move from $G_2$ to $G_1$ and thus leaving supplier $k$ as the sole supplier in $G_2$, or consume the entire remaining capacity in which case $T=\sum_{i\in G_1} D_i$ and $G_2$ will be empty. If $S_{k}+g_{k}-c_{k} = S_{\ell}+g_{\ell}-c_{\ell}$, then  one of them is randomly selected by Algorithm \ref{alg:2} and the same logic will hold. This contradicts the assumption of having more than supplier in $G_2$.
\end{proof}
A numerical example of Algorithm \ref{alg:2} follows.

\textbf{Numerical Example:} For simplicity, assume $c_i=c\,,\; \forall i\in I$. Let $|I|=4$ and $T=100$. There are four suppliers with $D_1=40$, $D_2=30$, $D_3=50$, and $D_4=50$ units of product to ship. The transportation cost for all suppliers is the same. Assume that the prices at point of sale $(S_i)$ are $S_1=10$, $S_2=9$, $S_3=12$, and $S_4=6$. The solution procedure is:
\begin{enumerate}
    \item[] \textit{Iteration 0:} $J = I$; $m = 4$. \vspace{-0.25cm}
    \item[] \textit{Iteration 1:} $l = \underset{i \in J}{\argmax}\{S_i+g_i\} \Longrightarrow l=3$; $T=100>D_3=50 \Longrightarrow G_1=\{3\}$; $T=T-D_3=100-50=50$; $u_3=50$; $m=3$; $J = \{1,2,4\}$. \vspace{-0.25cm}
    \item[] \textit{Iteration 2:} $l = \underset{i \in J}{\argmax}\{S_i+g_i\} \Longrightarrow l=1$; $T=50>D_1=40 \Longrightarrow G_1=\{3,1\}$; $T=T-D_1=50-40=10$; $u_1=40$; $m=2$; $J = \{2,4\}$;. \vspace{-0.25cm}
    \item[] \textit{Iteration 3:}  $l = \underset{i \in J}{\argmax}\{S_i+g_i\} \Longrightarrow l=2$; $T=10<D_2=30 \Longrightarrow G_2=\{2\}$; $T=0$; $u_2=10$; $m=1$; $J = \{4\}$. \vspace{-0.25cm}
    \item[] \textit{Iteration 4:} $T=0$;  $u_4=0$.
\end{enumerate}

The next step is to discuss the possible pricing strategies of the suppliers. Proposition \ref{prop:diff-point2} expands Proposition \ref{prop:diff-point} and discusses the price cap for each group.

\begin{proposition}\label{prop:diff-point2}
    The optimal suggestible transportation price for supplier $\ell\in G_1$ is $p^*_{\ell} = \max_{i\in G_2} \{S_i + g_i\}$ and for supplier $\ell\in G_2$ is $p^*_{\ell} = \max_{i\in G_3} \{S_i + g_i\}$. If $G_2$ is empty we will have $p^*_{\ell} = \max_{i\in G_3} \{S_i + g_i\}$ for any supplier $\ell \in G_1$.
\end{proposition}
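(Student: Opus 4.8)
The plan is to combine two structural facts from the preceding development. First, for a fixed allocation $v_\ell>0$ the supplier's objective in~(\ref{model:supplier}), namely $v_\ell(S_\ell-p_\ell)-g_\ell(D_\ell-v_\ell)=v_\ell(S_\ell+g_\ell-p_\ell)-g_\ell D_\ell$, is strictly decreasing in $p_\ell$ whenever $p_\ell<S_\ell+g_\ell$; hence among all prices that yield a given allocation supplier $\ell$ prefers the smallest, and the optimal price is the infimum of the prices that secure $\ell$'s target allocation (the full quantity $D_\ell$ when $\ell\in G_1$, the residual capacity $T-\sum_{i\in G_1}D_i$ when $\ell\in G_2$). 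Second, by the knapsack structure of the follower's problem (Algorithm~\ref{alg:1}, Table~\ref{tab:bestStrategy}) the allocation depends on the offered prices only through the descending order of the margins $p_i-c_i$, so securing a target allocation is equivalent to out-ranking a specific competitor; I would therefore reduce the proposition to identifying, for each $\ell$, the \emph{single most dangerous} rival and the largest margin that rival could rationally present.

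For $\ell\in G_1$ the dangerous rival is the unique member $k$ of $G_2$ (or, when $G_2=\emptyset$, the strongest member of $G_3$). The key point is Proposition~\ref{prop:diff-point}: as long as $k$ bids strictly below its cap $S_k+g_k$, its profit is strictly increasing in its shipped quantity, so if $\ell$ left room in the margin ordering $k$ could profitably raise its bid, overtake $\ell$, and grab a strictly larger allocation; thus no price of $\ell$ with margin below $k$'s cap-margin can be sustained. Conversely, once $\ell$'s margin weakly exceeds $k$'s cap-margin, $k$ can no longer overtake $\ell$, every rival outside $G_1$ is even less competitive by construction of Algorithm~\ref{alg:2}, and the members of $G_1$ collectively fit within $T$ by definition, so $\ell$ indeed ships all of $D_\ell$. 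Translating ``$p_\ell-c_\ell\ge (S_k+g_k)-c_k$'' into a price bound gives $p_\ell\ge S_k+g_k$ when the transportation cost is common across products (in general $p_\ell\ge S_k+g_k-c_k+c_\ell$, which collapses to the stated formula). Feasibility — that $p^*_\ell\le S_\ell+g_\ell$, so the price is genuinely ``suggestible'' and $G_1$ is weakly optimal for $\ell$ by Proposition~\ref{prop:diff-point} — follows because Algorithm~\ref{alg:2} admitted $\ell$ to $G_1$ only after $k$, i.e.\ $S_\ell+g_\ell-c_\ell\ge S_k+g_k-c_k$. Taking the infimum yields $p^*_\ell=\max_{i\in G_2}\{S_i+g_i\}$.

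For $\ell\in G_2$ the identical argument applies with $k$ replaced by $j=\argmax_{i\in G_3}\{S_i+g_i-c_i\}$: below its cap $S_j+g_j$, $j$ strictly prefers the residual allocation it could seize to the zero allocation it currently has, so $\ell$ must keep its margin at least $j$'s cap-margin, and feasibility holds because $\ell$ was ranked ahead of every $G_3$ member; hence $p^*_\ell=\max_{i\in G_3}\{S_i+g_i\}$. Finally, when $G_2=\emptyset$ we have $\sum_{i\in G_1}D_i=T$ exactly, so there is no residual ``cushion'' separating the $G_1$ block from $G_3$; then any $\ell\in G_1$ that let its margin slip below $j$'s cap-margin would be displaced by $j$, and the same computation gives $p^*_\ell=\max_{i\in G_3}\{S_i+g_i\}$ for every $\ell\in G_1$.

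I expect the main obstacle to be the boundary behaviour at the threshold price, where the displacing rival (and in fact the other $G_1$ members at their own optimal prices) sit exactly at this margin and are, by Proposition~\ref{prop:diff-point}, indifferent among the groups, while Algorithm~\ref{alg:1} breaks margin ties at random: $p^*_\ell$ is therefore really the infimum of prices that guarantee the target allocation, and the precise resolution of this knife-edge is what the later mixed-strategy (MSNE) analysis addresses. A secondary subtlety is heterogeneous transportation costs: the clean formula is exact when the $c_i$ are common (as in the running numerical example), and in general one should either take that as a standing hypothesis or carry the cost-corrected bound $S_i+g_i-c_i+c_\ell$ and note that it reduces to the claimed expression.
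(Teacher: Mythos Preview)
Your argument is essentially the paper's own: invoke Proposition~\ref{prop:diff-point} to cap each rival's bid at $S_i+g_i$, observe that a $G_1$ supplier secures full shipment precisely by (just) out-ranking the $G_2$ supplier's cap, note that overbidding further only lowers profit, and iterate the same logic for $G_2$ versus $G_3$ and for the $G_2=\emptyset$ case. Your explicit caveats about heterogeneous $c_i$ and about tie-breaking at the threshold are well placed---the paper's terse proof elides both, asserting directly that ``$G_1$ suppliers are guaranteed to transport their full production amount for any price above $p_{G_2}^{\max}$,'' which, as you correctly flag, is literally true only when the transportation costs coincide.
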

\begin{proof}
     By Proposition \ref{prop:diff-point} we have $p_i \leq S_i + g_i$. Let the maximum suggestible price for suppliers in $G_2$ be  
    $p_{G_2}^{\max}$, i.e., $p_{G_2}^{\max} = \max_{i\in G_2} \{S_i + g_i$\}. Then, $G_1$ suppliers are guaranteed to transport their full production amount for any price above $p_{G_2}^{\max}$. On the other hand, the supplier in $G_2$ reaches zero profit by offering $p_{G_2}^{\max}$, meaning that the price it is going to offer is $p_{G_2}^{\max}-\epsilon$, for some $\epsilon > 0$. Consequently, $G_1$ suppliers have no gain in offering transportation prices more than $p_{G_2}^{\max}$. With the same token, the optimal suggestible price for suppliers in $G_2$ is $p_{G_3}^{\max}$. Finally, if $G_2$ is empty, suppliers will either have a full shipment or they will not ship their products at all. Therefore, the optimal transportation price for suppliers in $G_1$ is the maximum price that the suppliers in $G_3$ are willing to pay, i.e., $p_{G_3}^{\max}$.
\end{proof}

The next step is to lay the foundations of obtaining the equilibrium. It was discussed that the Nash game between the suppliers is a representation of Bertrand Competition with Capacity Constraints. Because of the discontinuity of the utility function, this type of problem is known not to admit Pure Strategy Nash Equilibrium (PSNE). 
Let $P_i$ be the set of pure strategies of player $i$, i.e., its set of possible prices, and let the function $\pi_i:P=\prod_{i\in I} P_i \rightarrow \mathbb{R}$ be the objective (utility) function of player $i$ that has the set of all possible prices for all players as its domain and the set of real numbers as its range. Then, the objective function of a supplier $i$  
in $G_1$ for a specific strategy $p_i$ is:
\begin{equation}
  \pi_i(p_i,p_{-i}) = D_i(S_i-p_i) \,, 
\label{eq:OFG1}
\end{equation}
where $\pi_i(p_i,p_{-i})$ is the profit of player $i$, when all the other players keep their policy unchanged. 
Similarly, the objective function for a supplier $i$ in $G_2$ is:
\begin{equation}
  \pi_i(p_i,p_{-i}) = \hat{D}_i(S_i-p_i)-g_i(D_i-\hat{D}_i)  
\label{eq:OFG2}
\end{equation}
\cite{dasgupta1986existence} show that a game with the same continuity features as Bertrand Competition with Capacity Constraints admits a Mixed Strategy Nash Equilibrium (MSNE). In particular, left lower semi-continuity of $\pi_i$ grants the existence of an MSNE. For an in-depth discussion of PSNE and MSNE, refer to \cite{alain2012games}. 

Proposition \ref{prop:diff-point} determines that the maximum transportation price for any supplier $i$ is $S_i+g_i$, and Proposition \ref{prop:diff-point2} establishes that the optimal value for transportation price strategy of suppliers in $G_1$ is $p_{G_2}^{\max}=\max_{i\in G_2} \{S_i + g_i\}$, and for suppliers in $G_2$ is $p_{G_3}^{\max}=\max_{i\in G_3} \{S_i + g_i\}$. 
Additionally, in a Bertrand Competition with Capacity Constraint where the pure-strategy sets are well-defined intervals, a mixed strategy will be given by a cumulative distribution function \citep{menache2011network}. In this case, this set is a well-defined interval $P_i = [0,S_i+g_i]$. Therefore, a mixed strategy for player $i$ is a cumulative distribution function $F_i : P_i \rightarrow [0, 1]$, where $F_i(p) = Pr\{p_i \leq p\}$. 
Considering these results and the existence of MSNE, Proposition \ref{prop:MSNE} follows.

\begin{proposition}\label{prop:MSNE}
    Let $N=G_1 \cup G_2$, i.e., the set of suppliers in $G_1$ and $G_2$, with $|N|=n$ members. Let $\pi_i$ and $\pi_i^\prime$ be the supplier $i$'s profit values when transporting all or part of its products with price $p_{i}$, respectively. Let $\bm{p}=(p_1,p_2,\ldots,p_n)$ be the price strategy selected by all suppliers. Let $o_i$ be the expected revenue of supplier $i$ given the competitors' strategy, and $\ell_{i}=\frac{o_i}{\pi_i + \pi_i^\prime}$. Then, the cumulative probability function of the mixed Nash equilibrium transportation price for supplier $i$ is:
    \begin{equation}
        F_i(\bm{p}) = \sqrt[n-1]{\frac{{\displaystyle \prod_{j\in N-\{i\}}\ell_j}}{\ell_{i}^{n-2}}} 
    \end{equation}
\end{proposition}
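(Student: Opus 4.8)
The plan is to use the standard recipe for mixed equilibria of capacity‑constrained (Bertrand--Edgeworth) price games: take for granted the existence of an MSNE supported on the intervals $P_i=[0,S_i+g_i]$ (from \cite{dasgupta1986existence} together with left lower semi‑continuity of the $\pi_i$'s and Propositions \ref{prop:diff-point}--\ref{prop:diff-point2}), invoke the indifference principle to convert the equilibrium conditions into a coupled system of functional equations for the unknown CDFs $F_i$, solve that system in closed form, and finally check that the solution is a bona fide distribution function. The substantive content is entirely in setting up the right functional system; the rest is algebra and boundary bookkeeping.

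\textbf{Step 1 (indifference principle $\Rightarrow$ functional system).} In any MSNE, every price in the support of $F_i$ must give supplier $i$ the same expected payoff $o_i$. I would compute that expected payoff by conditioning on the rivals' realized prices. Since the suppliers in $N=G_1\cup G_2$ randomize independently, the probability that supplier $i$'s offer $p$ exceeds \emph{all} rivals' offers is $\prod_{j\in N\setminus\{i\}}F_j(p)$; reading off Algorithm \ref{alg:1}, the case split $C_1$--$C_5$, and the fact that $G_2$ contains at most one supplier, a supplier that outbids every rival secures full shipment and collects $\pi_i=\pi_i(p)$ from \eqref{eq:OFG1}, while otherwise it is the marginal (partial) shipper with payoff governed by $\pi_i'=\pi_i'(p)$ from \eqref{eq:OFG2}. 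Equating the resulting expected payoff to $o_i$ and rearranging yields, for every $i\in N$ and every $p$ in the support,
\[
\prod_{j\in N\setminus\{i\}}F_j(p)\;=\;\ell_i(p)\,,\qquad \ell_i(p)=\frac{o_i}{\pi_i(p)+\pi_i'(p)}\,.
\]

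\textbf{Step 2 (solve, then verify it is a CDF).} This is pure algebra. Multiplying the $n$ relations over all $i\in N$ and observing that each factor $F_k$ appears in exactly $n-1$ of them gives $\bigl(\prod_{k\in N}F_k\bigr)^{\,n-1}=\prod_{i\in N}\ell_i$, hence $\prod_{k\in N}F_k=\bigl(\prod_{i\in N}\ell_i\bigr)^{1/(n-1)}$; dividing this identity by the $i$‑th relation and simplifying the exponents produces
\[
F_i(\bm{p})=\sqrt[n-1]{\frac{\prod_{j\in N\setminus\{i\}}\ell_j}{\ell_i^{\,n-2}}}\,,
\]
which is the claimed formula (needing $n\ge 2$; for $n=2$ it collapses to $F_i=\ell_j$, and $n=1$ is the degenerate monopoly case). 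Since the derivation shows that \emph{every} MSNE must satisfy the Step 1 system, this also delivers (essential) uniqueness. To close the argument I would check that the displayed expression is a legitimate distribution function: it is continuous and monotone in $p$ on $[0,S_i+g_i]$ because $\pi_i$ and $\pi_i'$ are affine in $p$, and imposing $F_i=0$ at the common lower endpoint of the supports and $F_i=1$ at the relevant upper cap from Proposition \ref{prop:diff-point2} pins down $o_i$ and the lower endpoint and rules out profitable deviations outside the support.

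\textbf{Main obstacle.} The delicate part is Step 1: justifying that the capacity‑allocation mechanism genuinely reduces to the clean dichotomy ``outbid every rival $\Rightarrow$ full shipment, otherwise $\Rightarrow$ marginal shipment,'' while keeping careful track of which rivals are relevant, of the $C_1$--$C_5$ structure behind Algorithm \ref{alg:1}, of the at‑most‑one‑member structure of $G_2$, and of tie‑breaking on a measure‑zero event, so that the expected‑payoff identity really does collapse to $\prod_{j\in N\setminus\{i\}}F_j=\ell_i$ with $\ell_i=o_i/(\pi_i+\pi_i')$. Once that reduction is established, Step 2 is immediate and the CDF verification is routine.
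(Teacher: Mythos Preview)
Your proposal is correct and follows essentially the same route as the paper: both derive the functional system $\prod_{j\in N\setminus\{i\}}F_j=\ell_i$ from the indifference principle and then solve it by elementary algebra. The only cosmetic difference is that the paper extracts the pairwise relation $F_i\ell_i=F_j\ell_j$ and substitutes, while you multiply all $n$ relations and divide by the $i$th one; the two manipulations are equivalent and yield the same closed form.
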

\begin{proof}
    To find the MSNE in an $n$-player game, one needs to find the mixed strategies of the other $n-1$ players to make the $n$th player indifference in its strategy. The strategy in this game is $\bm{p}$, and the expected revenue of supplier $i$ given strategy $\bm{p}$ is calculated as:
    \begin{equation}
       o_i = \pi_i \left (\displaystyle \prod_{j\in N-\{i\}} F_j(\bm{p}) \right) + \pi_i^\prime \left(\displaystyle \prod_{j\in N-\{i\}} F_j(\bm{p}) \right)        \label{eq:prop}
    \end{equation}
    Solving Equation \eqref{eq:prop} results in:
    \begin{equation}
          \prod_{j\in N-\{i\}}F_j\left(\bm{p}\right) =\frac{o_i}{\pi_i + \pi_i^\prime} =\ell_{i}
         \label{eq:prop2}
    \end{equation}
    which is the key to finding $F_i(\bm{p})$. From Equation \eqref{eq:prop2}, it can be shown that $F_i(\bm{p})\ell_i = F_j(\bm{p})\ell_j$. Next, replace $F_j(\bm{p})$ with $\frac{\ell_i F_i(\bm{p})}{\ell_j}$ for all $j$ in Equation \eqref{eq:prop2}. This substitution yields:
    \begin{equation}
        \ell_i = \frac{\ell_i^{n-1} F_i(\bm{p})^{n-1}}{\displaystyle \prod_{j\in N-\{i\}}\ell_j}\label{eq:prop3}
    \end{equation}
    Finally, separating $F_i(\bm{p})$ in Equation \eqref{eq:prop3} and cancelling out one $\ell_i$ from both sides results in:
    \[
    F_i\left(\bm{p}\right) = \sqrt[n-1]{\frac{{\displaystyle \prod_{j\in N-\{i\}}\ell_j}}{\ell_{i}^{n-2}}}
    \]
\end{proof}
\vspace{-8pt}Due to structural complexity, obtaining the expected revenue, and therefore, analyzing the mixed strategies in a direct way is not feasible. Furthermore, solving the set of equations addressed in Proposition \ref{prop:MSNE} requires a guess for at least one of the $o_i$ values, whose uniqueness goes beyond the scope of this paper. Therefore, we refrain from further discussion around the closed-form of the pricing equilibrium. However, as will be discussed in the upcoming sections, knowing the equilibrium prices is not required for obtaining the shipment quantities and addressing the research questions. 

\vspace{-5pt}
\section{Regulation}
\label{sec:Regulation}
The next step is to scrutinize the minimum quota regulation. Among the addressed cases in Section \ref{sec:shipping}, we focused on the cases where at least one of the suppliers is left with zero allocated transportation capacity, excluding case $C_1$. Determining a minimum quota requires taking into consideration both sides of the transaction. One side of the deal is the shipping company, and ensuring that the suggested transportation price is competitive and does not hurt the transportation company's  profitability. The other side of the transaction is called Maximum Revenue Entitlement (MRE) in Canada\footnote{In Canada, the transportation price of crude oil is determined by negotiation.}. MRE is the maximum revenue that the shipping company is allowed to gain by transporting one unit of a supplier's product. The MRE value depends on transportation cost and price, and shipment amount. Therefore, the minimum quota must provide all suppliers with transportation opportunity, given that those suppliers suggest a transportation price that is profitable for the shipping company while being capped by MRE.

Let $L_i < D_i$ be the transportation quota for producer $i\in I$. Obviously, the total assigned quota must be less than the total transportation capacity:
\begin{equation}
    \sum_{i} L_i \le T
\end{equation}
Furthermore, MRE sets a maximum transportation price for supplier $i\in I$:
\begin{equation}
   u_i(p_i-c_i) \le \mbox{MRE}_i \quad \Longrightarrow \quad  p_i \le \frac{\mbox{MRE}_i}{u_i}+c_i
\end{equation}
In summary, the necessary conditions for the minimum quota of supplier $i\in I$ are:
\begin{eqnarray}
    && p_i \le \frac{\mbox{MRE}_i}{u_i}+c_i\,, \nonumber \\
    && L_i \le D_i\,, \nonumber \\
    && \sum_{i} L_i \le T \nonumber 
\end{eqnarray}

While these conditions lay the foundation for defining the minimum quota, they do not explain the pricing policies of the suppliers. Note that competitiveness of $p_i$ is translated into different values for different groups of suppliers. For the suppliers with lower $S_i+g_i$ values, $L_i$ is the best obtainable arrangement. Hence, these suppliers will not offer transportation prices more than $c_i$. Suppliers seeking to secure transportation capacities more than the assigned minimum quota will be required to offer higher prices. The natural consequence of this observation is that the minimum quota regulation is required for suppliers of $G_3$ (and perhaps $G_2$) if the government deems these products are necessary for the society. Also, any supplier $i$ not in $G_3$ will be required to offer a price $p_i > c_i$ for all of their shipped units, even if they receive a minimum quota.

Let $\hat{T}=T-\sum_i L_i$ and $\hat{D}_i = D_i-L_i$. Adjusted versions of Algorithms \ref{alg:1} and \ref{alg:2} can be used to categorize and assign a transportation capacity to the suppliers, given that the total capacity and demands are revised to $\hat{T}$ and $\hat{D}_i$. 

\begin{remark}\label{rem:reg}
   Considering that the minimum quota does not affect marginal profitability of the transportation company and suppliers, and given that the reduction in the total transportation capacity in the adjusted model (with $\hat{T}$ and $\hat{D_i}$) is more than the total guaranteed quota for formerly $G_1$ and $G_2$ suppliers, i.e., $T-\hat{T} > \sum_{i\in G_1,G_2}{L_i}$, it can be shown that the $G_1$ category will not expand, and the former $G_2$ and $G_3$ suppliers will not promote and fall under $G_1$ and $G_2$ categories, respectively.
\end{remark}

The regulator sets a minimum quota to stop depriving some suppliers from transporting their products. The rationale behind this idea is the non-financial importance of moving these products to the market. One must quantify this importance in order to find the social optimum assignment of minimum quotas. 

Let $\beta_i$ be the financially scaled importance score of one unit of product $i$ reaching the market\footnote{For instance, if a type of grain is the targeted product, the financial importance score is the monetary burden of importing that type of grain from another country or region, if this supplier cannot secure a minimum transportation capacity.}, and subsequently, $\beta_i\times (\text{Transported Quantity})$ constitutes the social profit of transportation. All $u_i$ values are calculated by Algorithm \ref{alg:1} and the suppliers are categorized by the adjusted version of Algorithm \ref{alg:2}. Let $i_{G_2}$ be the supplier in $G_2$ when it is not empty. For any supplier in $G_1$, the optimal transported amount, $u_i$, is equal to $D_i$, for the supplier in $G_2$, 
$u_{i_{G_2}} = L_{i_{G_2}} + (T-\sum_i L_i)-\sum_{i\in G_1}(D_{i}-L_{i})$, and for any supplier in $G_3$, $u_i = L_i$. 
Hence, assuming  
$u_{i_{G_2}} > L_{i_{G_2}}$, the regulator solves the Social Utility Maximization Model (SUMM) as follows\footnote{If $u_{i_{G_2}} = L_{i_{G_2}}$, then the supplier in $G_2$ downgrades to $G_3$ to pay the lowest possible shipping price, i.e., $c_i$.}.
\begin{eqnarray}
\label{model:SUMM}
    \underset{p_{i},L_{i},\, i\in\{\cup_{k\in \{1,2,3\}} G_k\}}{\maximize}\,\Pi_S \; = &&
    \underbrace{\sum_{i\in G_1}D_{i}(S_{i}-p_{i})}_{G_1 \text{suppliers' profit}} + \underbrace{\sum_{i \in G_1} D_{i}\,\min\{(p_{i}-c_{i}),\frac{\mbox{MRE}_i}{u_i}\}}_{\text{shipping company's profit of } G_1} + \underbrace{\sum_{i\in G_1} \beta_{i}D_{i}}_{\text{social utility of } G_1}  \\[4ex]
   && \resizebox{0.7\textwidth}{!}{$+\underbrace{\,\,\sum_{i\in G_2}(Q+L_i)(S_{i}-p_{i})-\sum_{i\in G_2} g_{i}\left(D_{i}-(Q+L_i)\right)}_{G_2 \text{ suppliers' profit}} + \underbrace{\sum_{i\in G_2}(Q+L_i)\min\{(p_{i}-c_{i}),\frac{\mbox{MRE}_i}{u_i}\}}_{\text{shipping company's profit of } G_2} + \underbrace{ \sum_{i\in G_2} (Q+L_i) \beta_{i}}_{\text{social utility of } G_2} \nonumber $} \\[4ex]
    && \resizebox{0.7\textwidth}{!}{$+ \underbrace{\sum_{i\in G_3} L_{i}(S_{i}-c_{i})-g_{i}(D_{i}-L_{i})}_{G_3 \text{ suppliers' profit}} + \underbrace{0}_{\text{shipping company's profit of } G_3} + \underbrace{\sum_{i\in G_3} \beta_{i}L_{i}}_{\text{social utility of } G_3}  \nonumber $}
\end{eqnarray}
s.t. \vspace{-15pt}
\begin{eqnarray}
    && L_i \le D_i \,, \quad\;\;\;\qquad\qquad \forall i \nonumber \\
    && \sum_{i} L_i \le T \,, \nonumber \\
    && Q = (T-\sum_i L_i)-\sum_{i\in G_1}(D_{i}-L_{i})\, \nonumber
\end{eqnarray}
Note that here, $u_i$ values are treated as parameters and $Q$ is the shipment quantity of the sole supplier in $G_2$ in addition to its minimum quota.  
Also, since $G_1$ products are fully shipped, their assigned minimum quota will not appear in the objective function.

\begin{proposition}\label{prop:minquota}
    Any supplier $i$ in group $G_3$ who satisfies the following constraint will not be assigned with a minimum quota:
    \begin{equation}
        S_i+g_i+\beta_i-c_i < \sum_{j\in G_2}(S_{j}+g_{j}+\beta_{j}-c_{j})
    \end{equation}
\end{proposition}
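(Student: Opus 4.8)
The plan is to collapse the regulator's objective $\Pi_S$ of \eqref{model:SUMM} into an explicit affine function of the minimum quotas $L_i$, $i\in G_3$, and then read off the sign of the coefficient of each $L_i$. The first step eliminates the prices. In the $G_3$ block the shipping company already breaks even, so its price is pinned at $c_i$ and no price variable appears there. For $i\in G_1\cup G_2$ I would argue that at any optimal solution of SUMM the MRE cap is slack, i.e.\ $p_i-c_i\le\mathrm{MRE}_i/u_i$: otherwise, lowering $p_i$ to $c_i+\mathrm{MRE}_i/u_i$ leaves the shipping company's profit from supplier $i$ (which equals $\mathrm{MRE}_i$ whenever the cap binds) unchanged while strictly increasing that supplier's own profit, which is decreasing in $p_i$ --- contradicting optimality. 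Hence $\min\{p_i-c_i,\mathrm{MRE}_i/u_i\}=p_i-c_i$ for $i\in G_1\cup G_2$, so inside both the $G_1$ and the $G_2$ blocks the supplier-profit and shipping-profit terms telescope, the price cancels, and what survives is $D_i(S_i-c_i+\beta_i)$ for each $i\in G_1$ and $(Q+L_{i_{G_2}})(S_{i_{G_2}}+g_{i_{G_2}}+\beta_{i_{G_2}}-c_{i_{G_2}})-g_{i_{G_2}}D_{i_{G_2}}$ for the unique $G_2$ supplier.

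The second step substitutes the capacity bookkeeping. Expanding $Q=(T-\sum_i L_i)-\sum_{i\in G_1}(D_i-L_i)$, the $G_1$ quotas cancel, and since $\sum_{i\in G_2}L_i=L_{i_{G_2}}$ one obtains $Q+L_{i_{G_2}}=T-\sum_{i\in G_1}D_i-\sum_{i\in G_3}L_i$. Inserting this, $\Pi_S$ becomes a constant depending only on the grouping, the $D_i$ and $T$, plus $\sum_{i\in G_3}L_i\bigl[(S_i+g_i+\beta_i-c_i)-(S_{i_{G_2}}+g_{i_{G_2}}+\beta_{i_{G_2}}-c_{i_{G_2}})\bigr]$. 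So the coefficient of $L_i$ in $\Pi_S$ is $(S_i+g_i+\beta_i-c_i)-(S_{i_{G_2}}+g_{i_{G_2}}+\beta_{i_{G_2}}-c_{i_{G_2}})$ for every $i\in G_3$, and because $G_2$ is a singleton (the earlier proposition, in the regime $G_2\neq\emptyset$ under which SUMM is posed) the subtracted term is exactly $\sum_{j\in G_2}(S_j+g_j+\beta_j-c_j)$.

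The conclusion is then immediate: under the hypothesis $S_i+g_i+\beta_i-c_i<\sum_{j\in G_2}(S_j+g_j+\beta_j-c_j)$ the coefficient of $L_i$ is strictly negative, $\Pi_S$ strictly decreases in $L_i$ over its feasible range, and the maximizer sets $L_i=0$; that is, supplier $i$ receives no minimum quota. Along the way I would verify feasibility of the implicit reallocation --- the capacity released by lowering $L_i$ is absorbed as the extra shipment $Q$ of the $G_2$ supplier, which stays strictly below $D_{i_{G_2}}$ because that supplier ships only partially --- and invoke Remark~\ref{rem:reg} to confirm that withdrawing $L_i$ does not reshuffle the groups.

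I expect the bookkeeping with $Q$ and the resulting linearity to be routine; the real obstacle is the price step: making precise that at the regulator's optimum each $\min\{p_i-c_i,\mathrm{MRE}_i/u_i\}$ resolves to $p_i-c_i$, including the boundary case $p_i-c_i=\mathrm{MRE}_i/u_i$ (which yields the same cancellation). If one prefers the statement to hold for arbitrary admissible prices rather than the optimal ones, the displayed inequality should be read as a sufficient condition obtained under the price choice that keeps the MRE cap slack, since an active cap would lower the marginal social contribution of the $G_2$ supplier.
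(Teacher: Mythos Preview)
Your proposal is correct and follows the same route as the paper: rewrite $\Pi_S$ as an affine function of the $L_i$, $i\in G_3$, with coefficient $(S_i+g_i+\beta_i-c_i)-\sum_{j\in G_2}(S_j+g_j+\beta_j-c_j)$, and read off the sign. The paper's proof simply asserts this rearrangement (``It can be shown that\ldots'') and treats the non-restrictive MRE case separately after the proposition, whereas you supply the missing steps --- the MRE-slackness argument, the price cancellation, and the $Q$ bookkeeping --- explicitly.
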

\begin{proof}
    It can be shown that:
\[
\Pi_S = \sum_{i\in G_3}L_i\left(\left(S_i+g_i+\beta_i-c_i\right)-\sum_{j\in G_2}\left(S_{j}+g_{j}+\beta_{j}-c_{j}\right)\right)+K\,,
\]
    in which $K$ is independent from $L_i$. Then, maximizing this function requires setting $L_i=0$ for any supplier that satisfies $S_i+g_i+\beta_i-c_i < \sum_{j\in G_2}(S_{j}+g_{j}+\beta_{j}-c_{j}).$
\end{proof}

Proposition \ref{prop:minquota} shows that even when the regulator intervenes for the benefit of the whole society, setting a minimum quota for some producers is still not justified.

When analyzing model (\ref{model:SUMM}), if MRE is restricting, the maximum transportation price depends on the allocated capacity $u_i$, which in turn according to Problem (\ref{model:shipping}) depends on the equilibrium transportation prices, i.e., $\bm{p}$ in Proposition \ref{prop:MSNE}. 
If MRE is not restricting, the objective function of SUMM can be re-arranged as 
\begin{eqnarray}
    \Pi_S = && \underbrace{\sum_{i\in G_3}\left(L_i(S_i+g_i+\beta_i-c_i)-g_iD_i\right)}_{\text{$G_3$ suppliers}}+\underbrace{\sum_{i\in G_1}D_i(S_i+\beta_i-c_i)}_{\text{$G_1$ suppliers}} \nonumber \\
    && + \underbrace{\sum_{i\in G_2}\left((Q+L_i)(S_{i}+g_{i}+\beta_{i}-c_{i})-g_{i}D_{i}\right)}_{\text{$G_2$ supplier}}\,,\label{eq:final_obj}
\end{eqnarray}
in which the equilibrium competitive prices are not present. Here, $Q = (T-\sum_i L_i)-\sum_{i\in G_1}(D_{i}-L_{i})$.

\begin{corollary} 
By Equation \eqref{eq:final_obj}, the best set of minimum quotas are independent from the suggested competitive prices of the suppliers. 
\end{corollary}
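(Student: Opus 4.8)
The plan is to argue directly from the rearranged objective in \eqref{eq:final_obj}: once MRE is non-restricting, the transportation prices $\bm p$ have already been eliminated from $\Pi_S$, so the optimization that determines the quotas sees no price information at all. First I would recall \emph{why} the $p_i$ terms vanish. In model \eqref{model:SUMM} the price $p_i$ enters only through the pair ``supplier $i$'s profit'' $+$ ``shipping company's profit from supplier $i$'': for $i\in G_1$ this is $D_i(S_i-p_i)+D_i(p_i-c_i)=D_i(S_i-c_i)$, and for $i\in G_2$ it is $(Q+L_i)(S_i-p_i)+(Q+L_i)(p_i-c_i)=(Q+L_i)(S_i-c_i)$, while the $G_3$ suppliers pay exactly $c_i$ and the carrier earns nothing from them, so $p_i$ never appears. (When MRE \emph{is} restricting the cancellation fails, because the carrier term becomes $\min\{p_i-c_i,\mbox{MRE}_i/u_i\}$; this is exactly why the corollary is stated only in the non-restricting regime.) This reproduces \eqref{eq:final_obj} and exhibits its right-hand side as a function of $\bm L$ alone — directly and through $Q=(T-\sum_i L_i)-\sum_{i\in G_1}(D_i-L_i)$ — together with the fixed data $D_i,S_i,g_i,\beta_i,c_i,T$.

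Next I would check that the rest of the SUMM instance is likewise price-free. The constraints $L_i\le D_i$, $\sum_i L_i\le T$, and the defining identity for $Q$ contain no $p_i$. The partition into $G_1,G_2,G_3$ is produced by Algorithm \ref{alg:2}, which evaluates only $S_i+g_i-c_i$ and compares cumulative demand against $T$ — it uses the ceiling price $p_i=S_i+g_i$ from Proposition \ref{prop:diff-point}, not the equilibrium price — so the groups, and hence which summations appear in \eqref{eq:final_obj}, do not depend on $\bm p$. Moreover, by Remark \ref{rem:reg} the groups are stable after the quota is imposed, so no price-dependent regrouping can occur at the optimum.

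Combining the two observations, the feasible set and the objective of the maximization over $\bm L$ are both completely determined by the exogenous data, with $\bm p$ entering neither; hence the optimal quota vector (equivalently, the argmax set) is identical for every profile of equilibrium competitive prices, which is the claim. The only point requiring care — the ``obstacle,'' such as it is — is making explicit that we are inside the non-restricting-MRE branch and that group membership is decided by $S_i+g_i-c_i$ rather than by the realized prices; once these are pinned down, the corollary is immediate from the form of \eqref{eq:final_obj}.
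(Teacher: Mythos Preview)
Your proposal is correct and follows the same logic the paper relies on: the corollary is presented as immediate from the form of \eqref{eq:final_obj}, and the paper offers no further argument beyond stating it. Your write-up is considerably more thorough than the paper's treatment --- you explicitly verify the cancellation of $p_i$ in each group, check that the constraints and the grouping procedure are price-free, and flag the MRE non-restriction hypothesis --- but the underlying approach is identical.
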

This result allows the government to find an optimal solution for SUMM (model \eqref{model:SUMM}) without discussing the equilibrium transportation prices. 
Before proceeding with the solution algorithm, Corollary \ref{cor:that} is presented to lay the foundation for categorizing the suppliers.
\begin{corollary}\label{cor:that}
    Whether or not the regulator sets a minimum quota for $G_1$ and $G_2$ suppliers, their categories will remain unchanged.
\end{corollary}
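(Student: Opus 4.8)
\textbf{Proof proposal for Corollary \ref{cor:that}.}
The plan is to trace how the \emph{adjusted} grouping procedure — Algorithm \ref{alg:2} run on the reduced data $\hat{T}=T-\sum_i L_i$ and $\hat{D}_i=D_i-L_i$ — relabels the suppliers, and to show that a supplier originally in $G_1$ or $G_2$ keeps its label. The first and easiest observation is that the sorting key used by Algorithm \ref{alg:2}, namely $S_i+g_i-c_i$, involves neither $T$, nor any $D_i$, nor any $L_i$; hence the order in which suppliers are examined after regulation is exactly the pre-regulation order, and the ``prefix'' structure produced by the greedy rule is the same shape: a block $G_1$ examined first, then at most one $G_2$ supplier, then $G_3$. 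In particular, if $\ell$ was originally in $G_1$, then every supplier examined no later than $\ell$ was also originally in $G_1$.

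The core of the argument is a telescoping identity. A supplier $\ell\in G_1$ remains in the $G_1$ of the adjusted problem iff, when it is reached, the residual adjusted capacity still accommodates $\hat{D}_\ell$, i.e.\ iff $\sum_{j\preceq\ell}\hat{D}_j\le\hat{T}$; since all $j\preceq\ell$ lie in $G_1$, it suffices to have $\sum_{i\in G_1}\hat{D}_i\le\hat{T}$. But $\hat{T}-\sum_{i\in G_1}\hat{D}_i=\big(T-\sum_i L_i\big)-\sum_{i\in G_1}(D_i-L_i)=Q$, the residual that model \eqref{model:SUMM} allocates (over and above its quota) to the sole $G_2$ supplier. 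Since $Q$ is a shipment quantity, SUMM is posed on the region $Q\ge0$ (with $Q=0$ being precisely the downgrade case flagged in the footnote), so $\sum_{i\in G_1}\hat{D}_i\le\hat{T}$ automatically, and every $G_1$ supplier stays in $G_1$ — it cannot be demoted to $G_2$ or $G_3$. For the unique $G_2$ supplier $i_{G_2}$, it still receives $Q+L_{i_{G_2}}$ units; when $Q>0$ it ships strictly between its quota and $D_{i_{G_2}}$ (it cannot reach $D_{i_{G_2}}$ because the shared pool is exhausted), so it remains in $G_2$, the boundary $Q=0$ being the footnote downgrade to $G_3$. Finally, Remark \ref{rem:reg} supplies the complementary direction — under its stated hypothesis no former $G_2$ or $G_3$ supplier is promoted — so the $G_1$ and $G_2$ rosters are preserved exactly.

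I expect the genuine obstacle to be \emph{not} the bookkeeping above but the justification that the SUMM optimization really does live in the regime $Q\ge0$: one must rule out a ``demotion cascade'' in which the regulator over‑reserves capacity (large quotas for low‑priority $G_3$ suppliers) to the point of collapsing the $G_1$ prefix. The resolution I would pursue is to show that such over‑reservation is never optimal — it converts the guaranteed full‑shipment social utility of $G_1$ members, each worth $\beta_i D_i$ plus the corresponding supplier and carrier profit, into at best partial shipment — so the constraint $Q\ge0$ can be imposed without loss. This is exactly the exchange/marginal argument already underlying Proposition \ref{prop:minquota} (and consistent with Algorithm \ref{alg:1}'s greedy logic), and making that ``without loss'' rigorous is the delicate step; everything else reduces to the order‑invariance of the sorting key and the identity $Q=\hat{T}-\sum_{i\in G_1}\hat{D}_i$.
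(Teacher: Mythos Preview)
The paper gives no proof for this corollary; it is stated without argument, apparently as an immediate consequence of Remark~\ref{rem:reg} together with the standing hypothesis $u_{i_{G_2}}>L_{i_{G_2}}$ (equivalently $Q>0$) under which SUMM is posed. Your write-up is therefore considerably more detailed than anything the authors provide, and the two ingredients you isolate --- that the ranking key $S_i+g_i-c_i$ is independent of every $L_i$, so the greedy order is preserved, and that the telescoping identity $Q=\hat T-\sum_{i\in G_1}\hat D_i$ combined with $Q\ge 0$ forces every original $G_1$ supplier to fit inside the adjusted capacity --- are correct and suffice, with Remark~\ref{rem:reg} handling the no-promotion direction exactly as you say.

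The caveat you raise in your final paragraph (that one must rule out quota vectors driving $Q<0$) is more careful than the paper itself: the authors do not argue that $Q\ge 0$ holds at the SUMM optimum via an exchange or marginal argument, they simply \emph{assume} $u_{i_{G_2}}>L_{i_{G_2}}$ when writing down model~\eqref{model:SUMM} and relegate the degenerate boundary to a footnote. So within the paper's own framing, the ``demotion cascade'' you worry about is excluded by hypothesis, not by optimality, and the additional justification you sketch is a genuine strengthening rather than a gap you need to fill.
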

This is important in the sense that $\Pi_S$ depends on the suppliers' categories, which in turn depends on $L_i$ values. Corollary \ref{cor:that} grants to start solving the problem by initially assuming that all suppliers belong to $G_3$. In other words, all suppliers will be assigned an initial minimum quota value.

\begin{algorithm}[!htb]
\protect\caption{\label{alg:3} Algorithm ${\sf SolveSUMM}$ takes the input parameters of the problem and finds a solution to model SUMM.}
\SetAlgoLined
\BlankLine
\KwIn{The set of suppliers $I$, production amount $D_i$, market price $S_i$, inventory cost $g_i$, transportation cost $c_i$, and financially scaled importance score $\beta_i$ for each product $i\in I$, and total transportation capacity $T$.}
\KwOut{Group category for each supplier $i\in I$.}
\BlankLine
\tcc{**********************************************************************************}
$\textrm{Max} \leftarrow 0$\;
$j \leftarrow 0$\;
$k \leftarrow 1$\;
\For{$L_i\in \{0,1,...,D_i\},\forall i\in I$ {\textrm{where}} $\sum_i L_i \le T$}{
    $\hat{T} \leftarrow T-\sum_iL_i$\;  
$G_1,G_2,G_3 \leftarrow \{\}$\;
$m \leftarrow |I|$\;
$J \leftarrow I$\;
$p_i \leftarrow S_i+g_i\,, \; i\in I$\;
\While {$T>0$ \AND $m>0$}{
    Set $\ell = \underset{i \in J}{\argmax}\{S_i+g_i-c_i\}$. If a tie occurs, break it randomly\;
    \eIf {$D_{\ell} \leq T$}{
        $\hat{T} \leftarrow \hat{T}-(D_{\ell}-L_{\ell})$\; 
        $G_1 \leftarrow G_1 \cup \{\ell\}$\;
    }{
        $T \leftarrow 0$\; 
        $G_2 \leftarrow G_2 \cup \{\ell\}$\;
    }
    $m \leftarrow m-1$\;
    $J \leftarrow I\backslash(G_1 \cup G_2)$\;
}
$G_3 \leftarrow J$\; 

\For{$i\in G_1$}{
    $u_i \leftarrow D_i$\;
}
\For{$i\in G_2$}{
    $u_i \leftarrow \hat{T}-\sum_{i\in G_1}(D_i-L_i)$\;
}
Calculate $\Pi_S$ from Equation \eqref{eq:final_obj}\;
\If{$\Pi_S > \mbox{Max}$}{
    $\textrm{Max} \leftarrow \Pi_S$\;
    $j \leftarrow k$\;
}
$k \leftarrow k+1$\;
}
$\Pi_S^* \leftarrow \textrm{Max}$\;
$L^* \leftarrow L_j$\;
\KwRet{$\Pi_S^*,L^*$}\;
\end{algorithm} 

This is important in the sense that $\Pi_S$ depends on the suppliers' categories, which in turn depends on $L_i$ values. Corollary \ref{cor:that} grants to start solving the problem by initially assuming that all suppliers belong to $G_3$. In other words, all suppliers will be assigned an initial minimum quota value. Algorithm \ref{alg:3} proposes a full-search for the solution of SUMM and disregards the equilibrium values and MRE. The output of Algorithm \ref{alg:3} is the set of minimum quotas for all suppliers who require such a quota.

In this section, the suppliers, the shipping firm and the government problems were presented. Also, the optimal and equilibrium solutions were investigated. In the next section, we use data from the Canadian market to showcase how the methods described in this paper can be used to solve real-world problems in Canada, which was the motivation of this research.

\begin{remark}
It should be noted that models defined by models \eqref{model:shipping} and \eqref{model:supplier} can have multiple solutions. The possibilities are:
 when in Algorithm \ref{alg:1}, $p_i-c_i=p_j-c_j$ for some $i, j \in I$; or,
    when in Algorithm \ref{alg:2}, $\underset{i \in J}{\argmax}\{S_i+g_i-c_i\} = \underset{j \in J}{\argmax}\{S_j+g_j-c_j\}$ for some $i, j \in J$. In this case, Algorithms \ref{alg:1} and \ref{alg:2} can find all the solutions by breaking the ties differently. However, it should be noted that finding a single solution is enough to proceed with the proposed algorithms, i.e., finding all the possible solutions is not required. In other words, the transportation company does not need to consider all the possible strategies, so long as the chosen strategy is optimal. 
\end{remark}

\section{Numerical Analysis: A Canadian Case Study}\label{Numerical}

The motivation of this research is the Canadian rail transportation system which had been the topic of controversy for the reasons addressed in the Introduction section. Here, we implement the introduced algorithms of Sections \ref{Model} and \ref{sec:Regulation}. Canadian rail transportation companies, CP and CN, ship several types of products, including but not limited to, petroleum products, grains, oil, wood and forest products, metal scraps. To discuss the applicability of the proposed models, we use real data for Canadian oil production and three types of grain: corn, barley and oat. In Canada, both crude oil and grains are strategically important for internal use and export and for the size of the industries developed around them.

The upside of this example is having a representative case of the Canadian transportation market while avoiding the data size complexity of the actual system. The downside of this example, however, is that the transportation capacity cannot be materialized if all of the products are not considered. To deal with this issue, the transportation capacity parameter is introduced as a sensitivity analysis variable. Hence, the solutions for different possible capacity values are obtained; furthermore, the intuition can be used by transportation companies to evaluate the benefit of capacity expansion. 

An important point to remember is that the measuring units in this study are bushel, year and Canadian Dollar (CAD). The considered transportation capacity is that of a whole year. Although the yearly process is not representative of the actual monthly production and shipping process, if we assume the uniform production and shipping every year, then yearly data reflects an acceptable approximation of the actual situation. The model parameters for corn, barley, oat and crude oil are presented in Table \ref{tab:data}. The details and sources of data are presented in Appendix \ref{app:data}. 

\begin{table}[H]
    \centering
    \caption{Model parameters for crude oil, corn, barley, and oat.}
    \begin{tabular}{|c|c|c|c|c|} \hline
      Product   & $D_i$ (bushels)      & $S_i$ (CAD) & $c_i$ (CAD) & $g_i$ (CAD)  \\ \hline
      Crude Oil & 128,185,505 & 15.3  & 1.297 & 0.79   \\
      Corn      & 440,916,666 & 0.09  & 0.06  & 0.0042 \\
      Barley    & 440,924,524 & 0.111 & 0.113 & 0.0049 \\
      Oat       & 275,577,827 & 0.11  & 0.118 & 0.0073 \\ \hline
    \end{tabular}
    \label{tab:data}
\end{table}

An important but missing piece of information is the total transportation capacity of the rail transportation company; the capacity depends on the shipping company's number of cars, the company's speciality, and the railing system's maintenance. Analyzing these data creates complications beyond the purpose of the numerical analysis for the presented models. Also, working with the real capacity relates only if all of the products are looked at in the analysis, not a subset of the shipped products. Therefore, we perform sensitivity analysis on total capacity and study its impact on the objective function. 

We study two scenarios under the assumption that MRE is not restrictive. First, we investigate a scenario where all products are equally important for the society. Next, we concentrate on a specific scenario where the left out product is more important than at least one of the transported products. This classification allows for a more comprehensive scrutiny of the possible market scenarios.

\subsection{Scenario 1: Equally Important Products}
Since all products are equally important, $\beta_i=\beta=0.1$ for all products.

\textbf{Without regulation:} in the absence of regulation, the shipping company selects the suppliers who propose higher prices relative to their transportation cost. Assuming the transportation capacity of 650,000,000 bushels per year, applying Algorithm \ref{alg:2} results in Table \ref{tab:no_reg}.

\begin{table}[H]
    \centering
    \caption{Capacity allocation under no regulation scenario.}\label{tab:no_reg}
    \begin{tabular}{|c|c|c|c|} \hline
      Products  & $S_i+g_i-c_i$ & Group & Transported Amount \\ \hline
      Crude Oil & 14.793  & $G_1$ & 128,185,505  \\ 
      Corn      & 0.0342  & $G_1$ & 440,916,666  \\
      Barley    & 0.0029  & $G_2$ & 80,897,829  \\
      Oat       & -0.0153 & $G_3$ & 0  \\ \hline 
    \end{tabular}
\end{table}
\vspace{-8pt}
Note that none of the suppliers gains more profit by reducing transportation amount. Table \ref{tab:no_reg} demonstrates that under the no regulation scenario, only crude oil and corn suppliers achieve the transportation capacity they need; barley suppliers ship less than 20\% of the total production, and oat producers will not be allocated any transportation capacity. Under this scenario ($L_{i}=0\,,\; \forall i\in G_3$), the objective function of the regulator, according to Equation \eqref{eq:final_obj}, is 1,869,321,483. 

\textbf{With regulation:} when there is a minimum quota regulation, the regulator needs to set $L_i$'s such that Equation \eqref{eq:final_obj} is maximized. The first step is determining if the only supplier (product) in category $G_3$ deserves a minimum quota. According to Proposition \ref{prop:minquota}, oat receives a minimum quota only if its total of marginal value to the transporter and society is larger than that of barley, which is not the case in the current scenario; this is because the social value of all products are assumed to be equal.

Since oat will not receive a minimum quota, applying Algorithm \ref{alg:3} will yield the same value for the objective function specified by Equation \eqref{eq:final_obj} as without regulation scenario. Intuitively, it can be concluded that if all products carry equal value for the society, minimum quota regulation does not help the suppliers with left-out products.

\subsection{Scenario 2: Unequally Important Products}
Let $\beta_{Barley}=\beta_{Oil}=\beta_{Corn}=0.1$ and, in a virtual reality, $\beta_{Oat}=1$. The only difference in the no regulation case is the regulator's objective function, which makes no difference in the absence of regulation. In other words, the ``without regulation'' scenario does not require further investigation.

\textbf{With regulation:} The suppliers' categories will not change from the shipping company's point of view. Therefore, the values presented in Table \ref{tab:no_reg} remain unchanged. However, the objective function value of Equation \eqref{eq:final_obj} will be altered; the overall value of oat may be greater than barley, and therefore, it may deserve a minimum quota. Based on Proposition \ref{prop:minquota}, Oat receives a minimum quota if $S_{Oat}+g_{Oat}+\beta_{Oat}-c_{Oat} > S_{Barley}+g_{Barley}+\beta_{Barley}-c_{Barley}$, which is true. Since it is declared that $L^*\neq 0$, Algorithm \ref{alg:3} will be applied to find the optimum solution of SUMM. The result of Algorithm \ref{alg:3} follows in Table \ref{tab:RegUneqalImportance}.
\begin{table}[H]
    \centering
    \caption{Capacity allocation under scenario 2 and with regulation.}\label{tab:no reg}
    \begin{tabular}{|c|c|c|c|} \hline
      Products  & Original Grouping & New Grouping & Transported Amount \\ \hline
      Crude Oil & $G_1$  & $G_1$ & 128,185,505  \\ 
      Corn      & $G_1$  & $G_2$ & 246,236,668  \\
      Barley    & $G_2$  & $G_3$ & 0  \\
      Oat       & $G_3$ & $G_3$ & 275,577,827  \\
       \hline 
    \end{tabular}
    \label{tab:RegUneqalImportance}
\end{table}
\vspace{-8pt}
 We have $u_{\text{Barley}} = 0$ because $u_{\text{Oat}} = D_{\text{Oat}} = L_{\text{Oat}}$; note that there is no upper bound on minimum quota by definition (although it may not be practical). As one can observe from the results, the social value of a product may be so high that a less profitable product, from the shipping company's point of view, replaces a more profitable product; barley in our case. In other words, the minimum quota regulations help the left-out suppliers if the relative social value of their product is so high that its overall social and financial profit surpass, at least, one of the products which is being fully or partially shipped. Putting differently, the profit of the supplier by itself does not guarantee a minimum quota in the social utility function.

Assume that the regulator defines an artificial cap for the minimum quota, which is a plausible scenario for numerous reasons such as lobbying, avoiding favoritism, regional political pressure, etc. If the hypothetical minimum quota is no more than 20\% of the production, then the revised transportation quantities are according to Table \ref{tab:RegUneqalImportanceQuotaRestriction}.

The method to calculate the revised quantities is as follows. First, keep the same overall rank of the products, i.e., crude oil, oat, corn and barley. Then, allocate the shipping quantity with a small nuance that if a product is in the list only after considering the social value, its allocated shipping capacity should not be more than the stipulated percentage. Finally, continue the allocation until the entire capacity is assigned.

\begin{table}[t]
    \centering
    \caption{Capacity allocation under Scenario 2, with regulation and minimum quota restriction.}\label{tab:no reg}
    \begin{tabular}{|c|c|c|c|} \hline
      Products  & Original Grouping & New Grouping & Transported Amount \\ \hline
      Crude Oil & $G_1$  & $G_1$ & 128,185,505  \\ 
      Corn      & $G_1$  & $G_1$ & 440,916,666  \\
      Barley    & $G_2$  & $G_3$ & 25,782,263  \\
      Oat       & $G_3$ & $G_3$ & 55,115,565  \\ 
       \hline 
    \end{tabular}
    \label{tab:RegUneqalImportanceQuotaRestriction}
    \vspace{-8pt}
\end{table}
As one can see, considering minimum quota shares the opportunity to less profitable products, or even deprives some less socially, yet more financially profitable products from shipping. Therefore, this idea shall be implemented with in-depth understanding and pre-calculation to ensure the fairness of all measurable and immeasurable aspects of production and transportation.

Figure \ref{fig:SummaryofNumerical} summarizes the results of the above scenarios and illustrates the shipped quantities (tableau \ref{fig:commodityQuantity}) and their share from the total transportation capacity (tableau \ref{fig:commodityPercentage}) under each scenario. It can be noticed that crude oil producers are able to transport their entire production under all three scenarios. However, introducing minimum quota regulations pushes less competitive products such as barley and oat to a different group, and hence, drastically change their transported amount. In other words, the regulator must remain vigilant in determining the minimum quotas to avoid unwanted consequences, which further signifies the importance and impact of the present study.

\begin{figure}[b!]
    \centering
    \begin{subfigure}[b]{0.45\linewidth}
        \includegraphics[width=\linewidth]{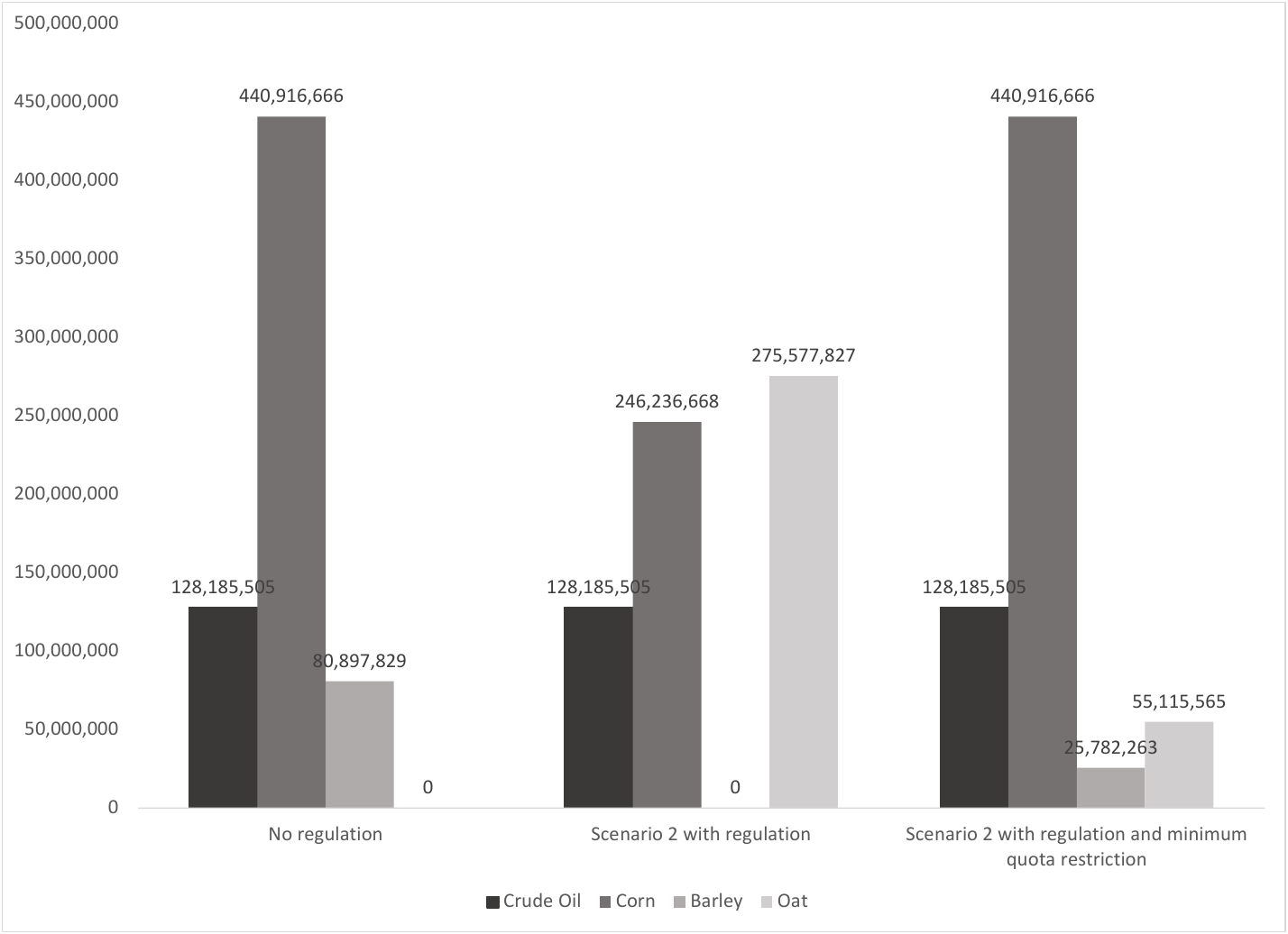}
        \caption{}
        \label{fig:commodityQuantity}
    \end{subfigure}
    \qquad
    \begin{subfigure}[b]{0.45\linewidth}
        \includegraphics[width=\linewidth]{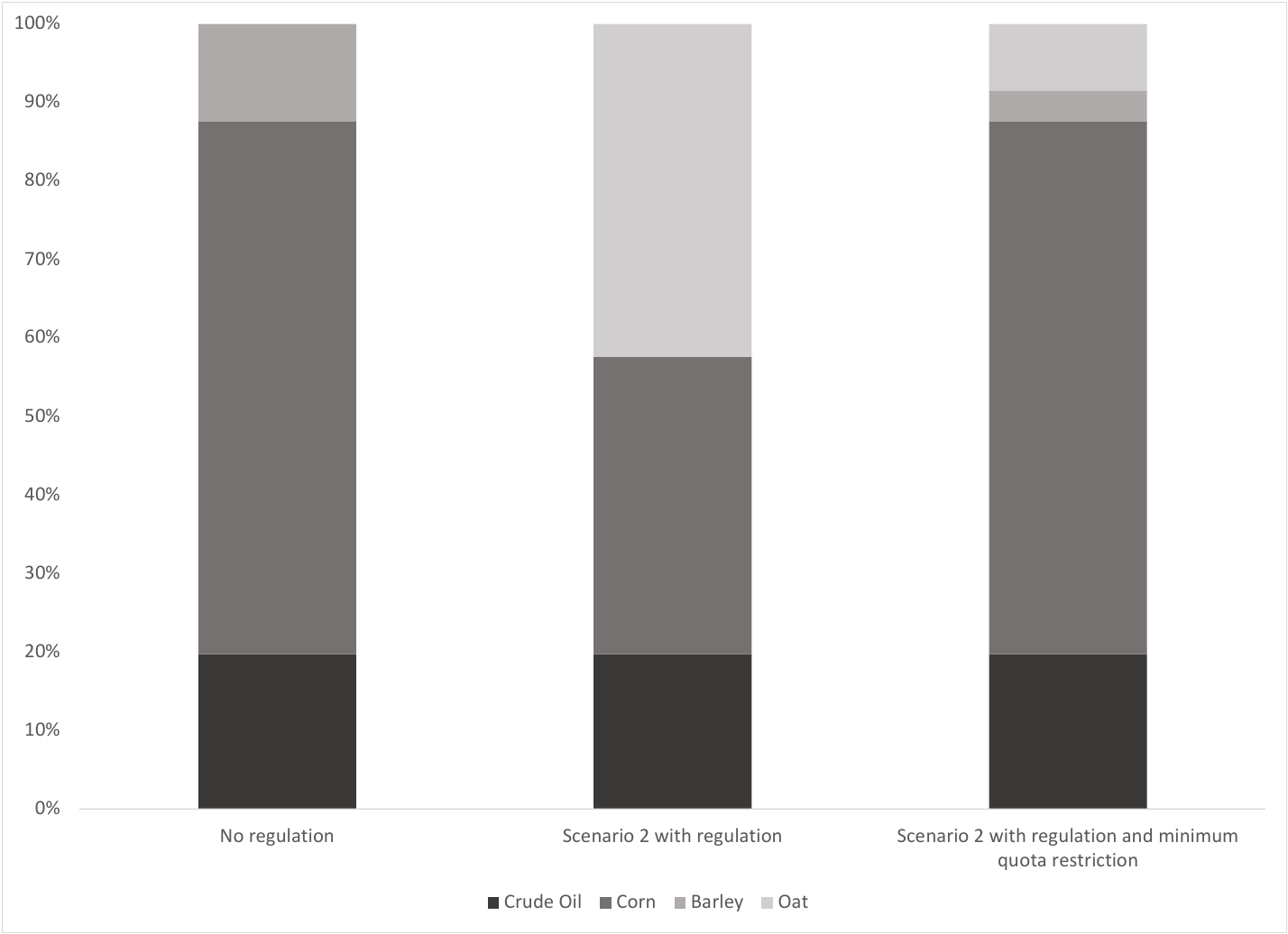}
        \caption{}
        \label{fig:commodityPercentage}
    \end{subfigure}
    \caption{The allocation of transportation capacity for different commodities under three different scenarios. The shipment quantity of each commodity is shown in (\ref{fig:commodityQuantity}) and the percentage of the total shipping capacity allocated to each commodity is presented in (\ref{fig:commodityPercentage}).}
    \label{fig:SummaryofNumerical}
\end{figure}

\subsection{Transportation Capacity} This section examines the impact of transportation capacity on the regulator's objective function value. The purpose of this section is scrutinizing the results when: 1) there is a lack of available data about the total transportation capacity; and, 2) one needs to avoid analyzing the data about all products that a shipping company transports during a time period. 

According to Proposition \ref{prop:minquota}, and assuming that all products are equally important, none of the products in $G_3$ receive a minimum quota. Also, note that $T$ is fundamental in determining the categories. After these remarks, we proceed by investigating the impact of total transportation capacity on the regulator's objective function, when the products carry equal importance.

\begin{table}[H]
    \centering
    \caption{Impact of transportation capacity on the allocated capacity}
    \label{tab:capacity-case 1}
    \resizebox{\columnwidth}{!}{
    \begin{tabular}{|c|c|c|c|c|c|}
      \hline
       Area & Transportation Capacity             & Corn  & Barley & Oat   & Crude Oil \\ \hline
       $A_1$ & $\bm{T} < D_{Oil}$                 & $G_3$ & $G_3$  & $G_3$ & $G_2$     \\
       $A_2$ & $D_{Oil} \le \bm{T} < D_{Oil}+D_{Corn}$ & $G_2$ & $G_3$  & $G_3$ & $G_1$ \\
       $A_3$ & $D_{Oil}+D_{Corn} \le \bm{T} < D_{Oil}+D_{Corn}+D_{Barley}$ & $G_1$ & $G_2$ & $G_3$ & $G_1$ \\
       $A_4$ & $D_{Oil}+D_{Corn}+D_{Barley} \le \bm{T} < D_{Oil}+D_{Corn}+D_{Barley}+D_{Oat}$ & $G_1$ & $G_1$ & $G_2$ & $G_1$ \\
       $A_5$ & $D_{Oil}+D_{Corn}+D_{Barley}+D_{Oat} \le \bm{T} $ & $G_1$ & $G_1$ & $G_1$ & $G_1$ \\ \hline
    \end{tabular}
    }
    \vspace{-8pt}
\end{table}
Considering Table \ref{tab:capacity-case 1}, the regulator's objective function relative to the transportation capacity is shown in Figure \ref{fig:capacity-case 1}. Note that $\Pi_S$ is concave and increasing in $T$. Also, the social value of each product is concave and increasing due to diminishing marginal social benefits.

\begin{figure}[H]
    \centering
    \begin{tikzpicture}
        \begin{groupplot}[group style={group name= my plots,group size=1 by 1, horizontal sep=1.5cm, vertical sep=1.2cm},height=7.5cm,width=15.5cm,ticklabel style={font=\small}]
		
		\nextgroupplot[ylabel={$\Pi_S\times 10^7$},xlabel={$T\times 10^7$}]
		
		\addplot[color=black, no marks, thick,solid, smooth] coordinates {
			(0,	-10.72906473)
            (1,	4.163935275)
            (2,	19.05693527)
            (12,   167.9869353)
            (12.8185505,	180.1776079) 
            (13,	180.2250317)
            (16,	180.6276317)
            (49,	185.0562317)
            (52,	185.4588317)
            (55, 	185.8614317)
            (56.910217,	186.1177829) 
            (57,	186.1039482)
            (99,	190.4257482)
            (101.0026696,	190.6318229) 
            (102,	190.9320296)
            (128.5604522,	193.5694825) 
            (128.5604523,	193.3683107)
            (130,	193.3683107)
            (150,	193.3683107)

		};
		\draw [dashed, very thick] (101.0026696,210) -- (101.0026696,-1);
		\draw [dashed, very thick] (56.910217,210) -- (56.910217,-1);
		\draw [dashed, very thick] (12.8185505,210) -- (12.8185505,-1);
		\draw [dashed, very thick] (128.5604522,210) -- (128.5604522,-1);
		\node at (0,150) {\small $A_1$};
		\node at (35,150) {\small $A_2$};
		\node at (80,150) {\small $A_3$};
		\node at (115,150) {\small $A_4$};
		\node at (140,150) {\small $A_5$};
		\end{groupplot}
    \end{tikzpicture}
    \caption{Impact of altering transportation capacity ($T$) on the regulator's objective function.}
    \label{fig:capacity-case 1}
\vspace{-5pt}
\end{figure}
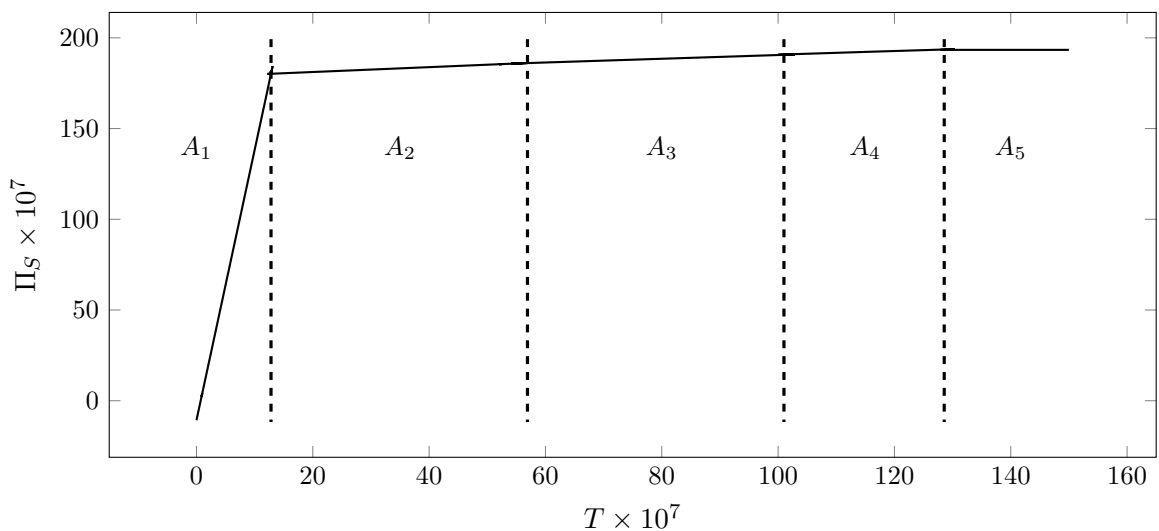

As Figure \ref{fig:capacity-case 1} depicted, the marginal value of adding transportation capacity is larger when the total capacity is low. In other words, marginal value of adding transportation capacity has an inverse relation with its size, with the slope being dependent on the total market production.

These marginal values can be used by the regulator to design an incentive system for the shipping companies based on periodic market production and total available capacity, where the transporter receives a financial reward for adding to the system's capacity. This could be the topic of future studies.

Finally, one can claim that for the case of unequally important products, the total social benefit derived from increasing the transportation capacity is the same as the equally important products; the only difference is that the positive slope of the curve is higher when products are not equally important because more profitable products are ignored for the sake of more socially valuable but less profitable products. \vspace{-5pt}

\section{Conclusion}
\label{Conclusion}
This paper aims to improve supply chain resiliency while considering the society's utility when a number of suppliers of critical commodities compete with each other for transportation capacity. This issue is a known problem in Canada and parts of the United States, where it is possible that crop and oil producers compete with each other when demand for transportation capacity is higher than usual, which has led to enactment of ``Canada's Fair Rail for Farmer's Act'' in Canada, as well as discussions for enactment of similar laws in the United States. Satisfactory implementation of such laws and government's intervention in the context of free markets requires multi-faceted analyses. This paper is the first effort to model the impact of the mentioned intervention mathematically.

Consequently, we discussed the equilibrium shipment quantities decided by a shipping company, equilibrium transportation prices offered by competing suppliers, and minimum quota regulation set by the government. In particular, this research was motivated by a Canadian rail transportation case; consequently, a framed case study accompanies the theoretical research.

On the theoretical side, a rail transportation company decides on the quantity of the products to ship based on the transportation prices offered by the suppliers. Given the limited transportation capacity, the suppliers (each having a distinguished product) compete \'a la Nash to have their products moved from the warehouse to the point of sale. Considering that some suppliers do not have large enough profit margins to offer competitive prices, the government sets a minimum transportation quota to make sure that all important products reach the market. This paper proposed mathematical models for the suppliers, the transporter, and the government. We characterized solution algorithms for the shipping company, the suppliers and the government to maximize the social utility. 

Finally, a Canadian case was studied to address the main research questions of this paper and shed light on the proposed solution methods. For the Canadian case study, four products were considered: three grains and crude oil. Two scenarios were investigated. One scenario assigns equal weights to all products; in the other scenario, the social value of the products are considered to be unequal.

It was outlined that if the social value of all products are equal, the product not shipped due to lower marginal revenue for the supplier (and consequently lower cap on the suggested transportation price), does not deserve a minimum quota to maximize social utility. In the Canadian case, this product was discovered to be oat. For the unequal weights scenario, social value of oat was set such that it surpassed the social value of barley and corn. This alteration made oat worthy of receiving a minimum transportation quota. In other words, in the absence of a cap on the minimum quota amount, the less profitable product (from the shipping company's point of view) can replace more profitable products. 

Furthermore, it was revealed that the marginal value of transportation capacity increases as the demand for transportation increases. However, after a certain point, the importance of increasing transportation capacity diminishes. It was reasoned that the regulator can employ similar analyses when encouraging the enhancement of transportation capacity. For instance, an incentive mechanism for increasing the capacity can be designed to handle seasonal or periodical market fluctuations.

Besides the factors considered in this paper, such as the market structure and the pricing schemes, that impact the shipper-supplier dynamics, product specifications such as price elasticity of demand and the location of transportation demand and their impact on those dynamics also deserve a thorough analysis and is an interesting direction for future research. Moreover, within the assumed market structure of this research, price was considered to be the sole factor driving the supply and demand dynamics. However, there are other important factors that can be studied in the future research efforts. The list includes history of business relationship between a supplier and a carrier, economies of scale, the expected future volume of transportation service requests by a supplier, and compatibility between transportation equipment and product specifications.
Considering some of these new factors could also add interesting dimensions to the problem. For example, while considering the location of transportation demand, one could also incorporate a geographical equity factor and use an equitable districting paradigm (see \cite{behrooziCarlsson2020Springer}) to allocate transportation capacity to different geographical regions in an equitable way.
Furthermore, one could also analyze the impact of each of the mentioned scenarios on the total society's utility and investigate whether the government should invest on expanding the transportation firm's capacity. 
These directions, plus expanding the proposed configurations to multi-period models, possibly with inventory, time-value-of-money, and perishability considerations, are also inspirational for future studies in this field.

Another stimulating path for future research is modelling the discussed problem as an auction, where the shipping company accepts bids from the suppliers on its available transportation capacity. This case differs from our model in the sense that the bidders will be able to obtain information about the other bids and change their offerings accordingly.

Finally, it is evident that the impact of natural disasters and climate change-related disturbances such as flooding, draughts or wildfires, and uncertainties related to geopolitical tensions will become more apparent in the near future, which have proven to cause significant disruptions in the commodity production as well as the available transportation capacity and options. Such disruptions are very difficult to forecast and have a profound impact on the supply and demand equilibrium. As such, generalizing the developed mathematical models for the case of stochastic or uncertain production and transportation capacity will make an interesting direction for research in this field and can lead to more robust and resilient supply chains.

\section*{Acknowledgement}
The authors are grateful for the support from Canada's NSERC Discovery Grant [\#2017-03743] which made this research possible.

{\small
\singlespacing

\bibliographystyle{apalike}
\bibliography{Transportation.bib}
}

\newpage 
\appendix
\part*{Appendix}

\section{Numerical Case Data}
\label{app:data}
\textbf{Corn:} nationally, corn's yield is 148.2 bushels per acre.\footnote{Statistics Canada; URL: https://www150.statcan.gc.ca/n1/daily-quotidien/190828/dq190828a-eng.htm. Retrieved May 2020.} The total cost of growing corn per acre in 2020 is \$532.56\footnote{URL: https://www.gov.mb.ca/agriculture/farm-management/production-economics/pubs/cop-crop-production.pdf. Retrieved on May 2020.}, resulting to the cost of \$3.59 per bushel of corn, which weights 56 lbs per bushel\footnote{URL: http://www.ilga.gov/commission/jcar/admincode/008/00800600ZZ9998bR.html. Retrieved on May 2020.}. Therefore, each pound of corn costs almost 6 cents ($c_i$). The storage cost, which is assumed to be the same for other types of grains, is 22 to 25 cents per bushel per year (average of 23.5). Hence, the storage cost ($g_i$) is calculated to be 0.42 cent per pound per year. The total corn production during 2019 harvest year was 13.6 million tonnes, with the average price of \$197.69 per tonne (based on data for the province of Ontario)\footnote{Grain Farmers of Ontario; URL: https://gfo.ca/marketing/average-commodity-prices/historical-barley-prices/. Retrieved on May 2020.}. Finally, the total per pound selling price of corn was approximately 9 cents ($S_i$), and the annual production was 440,916,666 bushels ($D_i$). 

\textbf{Barley:} in Canada, the average yield of barley is 66.4 bushels per acre\footnote{All sources are the same as corn.}. The total cost of growing barley per acre in 2020 is \$359.89, resulting to the cost of \$5.42 per bushel of barley, which weighs 48 lbs per bushel. Therefore, each pound of corn costs almost 11.3 cents ($c_i$). The storage cost is 23.5 cents per bushel per year, which translates to 0.49 cent per pound per year ($g_i$). The total barley production during 2019 harvest year was 9.6 million tonnes, with the average price of \$235.45 per tonne (based on Ontario data). Furthermore, the total per pound selling price of barley was 11.1 cents ($S_i$), and the yearly production was 440,924,524 bushels ($D_i$)\footnote{The governmental reports of Canada shows a negative return of investment for Barley for 2020. See https://www.gov.mb.ca/agriculture/farm-management/production-economics/pubs/cop-crop-production.pdf}.

\textbf{Oat:} nationally, the yield for oat is 89.6 bushels per acre\footnote{All sources are the same as of corn.}. The total cost of growing oat per acre in 2020 is \$338.17, resulting to the cost of \$3.77 per bushel of oat, which weights 32 lbs per bushel. Therefore, each pound of oat costs almost 11.8 cents ($c_i$). The storage cost is 23.5 cents per bushel per year, or 0.73 cent per pound per year ($g_i$). The total oat production during the 2019 harvest year was 4 million tonnes, with the average price of \$240.48 per tonne (based on Ontario data). The selling price for oat was 11 cents per pound ($S_i$), and the annual production was 275,577,827 bushels ($D_i$).

Note that while the total production amounts are reported for 2019, the recorded per acre costs belong to the province of Manitoba, Canada, during 2020. The reason is that the base values are approximated in January 2020, according to 2019 values and next year estimation.

\textbf{Crude oil:} we specifically select \textit{Mixed Sweet Blend Edmonton} as a representative of petroleum products, with the average price of 434 CAD\footnote{Government of Canada; URL: https://www.nrcan.gc.ca/our-natural-resources/energy-sources-distribution/clean-fossil-fuels/crude-oil/oil-pricing/18087. Retrieved on May 2020.} per cubic meter during 2019; this equals \$15.3 per bushel ($S_i$)\footnote{The measurement unit of volume for all products is translated to bushels for the sake of normalization and comparison.}. The projected production rate for Western Canada light crude oil in 2020 is 4,517,138 cubic meters, which equals 128,185,505 bushels ($D_i$). As of 2014, production cost, excluding capital spending and gross taxes, is \$5.85 per barrel or \$1.297 per bushel. For the transportation cost, unfortunately we did not have access to Canadian data. But since US total production price is close to Canadian production price, we rely on Shale cash cost per barrel which is \$3.59\footnote{URL: https://www.fool.com/investing/2017/03/19/you-wont-believe-what-saudi-arabias-oil-production.aspx, retrieved on June 2020}, or equivalently, \$0.79 per bushel.

\end{document}